\documentclass[11pt]{article}
    
\usepackage{amsthm}
\usepackage{amssymb}
\usepackage{hyperref}                           
\usepackage{mathrsfs}
\usepackage{color} 
\usepackage{amsmath}                    
\usepackage{diagbox}

\def\sttf2#1#2{\left[\!\!\left[#1\atop#2\right]\!\!\right]}
\def\stf3f#1#2{\left[\!\!\left[\!\!\left[#1\atop#2\right]\!\!\right]\!\!\right]}
\def\stff4#1#2{\left[\!\!\left[\!\!\left[\!\!\left[#1\atop#2\right]\!\!\right]\!\!\right]\!\!\right]}
\def\stss2#1#2{\left\{\!\!\left\{#1\atop#2\right\}\!\!\right\}}

\newtheorem{theorem}{Theorem}

\newtheorem{Lem}{Lemma}

\allowdisplaybreaks

\begin{document}

\title{Fibonomial determinants}  

\author{ 
Takao Komatsu
\\ 
\small Institute of Mathematics\\[-0.8ex]
\small Henan Academy of Sciences\\[-0.8ex]
\small Zhengzhou 450046 China\\[-0.8ex]
\small \texttt{komatsu@zstu.edu.cn}\\
\small and\\[-0.8ex]
\small Department of Mathematics\\[-0.8ex] 
\small Institute of Science Tokyo\\[-0.8ex] 
\small 2-12-1 Ookayama, Meguro-ku\\[-0.8ex] 
\small Tokyo 152-8551 Japan\\[-0.8ex]
\small \texttt{komatsu.t.al@m.titech.ac.jp} 
}

\date{
}

\maketitle

\begin{abstract}  
In this paper, we find several determinants expressing the Fibonomial coefficients. We also give the generating functions, Vandermonde identity, and continued fractions about Fibonomial coefficients.   

\noindent 
{\bf Keywords:} Fibomials, determinants, continued fractions, Chu-Vandermonde identity, generating functions. 

\noindent 
{\bf MR Subject Classifications:} 11B39, 15A15.
\end{abstract}

\section{Introduction}

Fibonomial coefficients $\binom{n}{m}_F$ are defined by 
\begin{equation}  
\binom{n}{m}_F=\frac{F_n F_{n-1}\cdots F_1}{F_m F_{m-1}\cdots F_1\cdot F_{n-m}F_{n-m-1}\cdots F_1}\,, 
\label{def:fibonom}
\end{equation} 
where Fibonacci numbers are determined by 
\begin{equation}  
F_n=F_{n-1}+F_{n-2}\quad(n\ge 2),\quad F_0=0,\quad F_1=1\,. 
\label{def:fib}
\end{equation} 
The Fibonomial coefficients (\cite[A010048]{oeis}) are similar to binomial coefficients $\binom{n}{m}$ and can be displayed in a triangle similar to Pascal's triangle. From the definition (\ref{def:fibonom}), it is clear that 
\begin{equation}  
\binom{n}{m}_F=\binom{n}{n-m}_F\,. 
\label{symmetric:fibonom}
\end{equation} 
Fibonomial coefficients satisfy the recurrence relation 
\begin{equation}  
\binom{n}{m}_F=F_{n-m-1}\binom{n-1}{m-1}_F+F_{m+1}\binom{n-1}{m}_F
\label{recf:fibonom}
\end{equation} 
(see, e.g., \cite{Gould,Holte}).  
Several identities are derived in \cite{ST05,Trojovsky07}, including
$$
\sum_{j=0}^m(-1)^{\frac{j(m+j)}{2}}\binom{m}{j}_F=0\,.
$$   

Although research on determinants related to Fibonacci numbers in (\ref{def:fib}) has long been well known (see, e.g., \cite{Koshy1,Koshy2}), determinants that directly represent Fibonacci numbers themselves were hardly known (see, e.g., \cite{CDNN,Proskuryakov,Strang}). However, by using Cameron's operator (\cite{Cameron89}), determinants representing Fibonacci numbers have been provided (\cite{Ko20b}) as well as other numbers (\cite{KKV22}), and very recently, extensions to the determinants of Fibonacci polynomials (\cite{Ko25}) and further determinant representations of general Fibonacci polynomials (\cite{KK26}) have been successfully obtained. 

In \cite{Kilic10}, one of the determinant expressions of Fibonomials is given by 
the $n\times n$ determinant:  
\begin{equation}
\binom{n+3}{3}_F=\left|\begin{array}{cccccc}  
3&-1&0&&&0\\ 
6&3&-1&&&\\
-3&6&3&&&\\
-1&&&\ddots&&0\\
0&&&6&3&-1\\
&0&-1&-3&6&3 
\end{array}\right|\,.
\label{eq:emrah10}
\end{equation}
See also \cite{AK25}.  

On the other hand, several determinant expressions of binomial coefficients are known.  For example, one determinant representation in the form of a Hessenberg matrix is: 

\begin{equation}  
\binom{n+1}{k}=\left|\begin{array}{ccccc}
\binom{n+1}{n}&1&0&&0\\
\binom{n+2}{n}&\binom{n+1}{n}&1&&\\
\binom{n+3}{n}&\binom{n+2}{n}&\binom{n+1}{n}&&0\\
\vdots&&\ddots&\ddots&1\\
\binom{n+k}{n}&\cdots&\binom{n+3}{n}&\binom{n+2}{n}&\binom{n+1}{n}
\end{array}\right|\,,
\label{det:binom1}
\end{equation}

In this paper, we give some determinant representations of the Fibonomial coefficients in the form of a Hessenberg matrix (see, e.g., \cite{KT10,KTH10}), as an analogy to the determinant representations of the binomial coefficients. In particular, the expression of (\ref{eq:emrah10}) can be generalized.   
Furthermore, a Fibonomial version of the famous Vandermonde identity is given.  
As an application, the continued fraction expansions of the generating functions of the Fibonomial coefficients are shown. Several identities of the Fibonomial coefficients are also represented by using Trudi's formula.

\section{Main results}  

As an analogous result of (\ref{det:binom1}), we get the following.  

\begin{theorem}  
\begin{equation}  
\binom{n+1}{k}_F=(-1)^{\binom{k}{2}}\left|\begin{array}{ccccc}
\binom{n+1}{n}_F&1&0&&0\\
\binom{n+2}{n}_F&\binom{n+1}{n}_F&1&&\\
\binom{n+3}{n}_F&\binom{n+2}{n}_F&\binom{n+1}{n}_F&&0\\
\vdots&&\ddots&\ddots&1\\
\binom{n+k}{n}_F&\cdots&\binom{n+3}{n}_F&\binom{n+2}{n}_F&\binom{n+1}{n}_F
\end{array}\right|\,,
\label{det:fibonom1}
\end{equation}
where 
$$
(-1)^{\binom{k}{2}}=\begin{cases}
+1&\text{if $k\equiv 2,3\pmod 4$};\\
-1&\text{if $k\equiv 0,1\pmod 4$}. 
\end{cases}
$$
\label{th1}
\end{theorem}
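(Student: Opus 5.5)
The plan is to reduce the determinant to a convolution identity via the standard Hessenberg recursion, and then prove that identity using the $q$-binomial theorem. Write $D_k$ for the $k\times k$ determinant on the right of (\ref{det:fibonom1}) and put $a_i=\binom{n+i}{n}_F$, with $a_0=1$ and $D_0=1$.

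\emph{Step 1: Hessenberg recursion.} Expanding along the last row (equivalently, the standard recursion for lower Hessenberg matrices with $1$'s on the superdiagonal) gives
$$
D_k=\sum_{i=1}^{k}(-1)^{i-1}a_iD_{k-i}\qquad(k\ge1).
$$
In generating-function language this says
$$
\Bigl(\sum_{k\ge0}(-1)^kD_kx^k\Bigr)\Bigl(\sum_{i\ge0}a_ix^i\Bigr)=1 .
$$
So the theorem is equivalent to showing that the power series $\sum_{j\ge0}(-1)^{j}(-1)^{\binom{j}{2}}\binom{n+1}{j}_F x^j$ is the reciprocal of $A(x)=\sum_{i\ge0}\binom{n+i}{n}_Fx^i$. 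Once that is established, comparing coefficients gives $(-1)^kD_k=(-1)^k(-1)^{\binom k2}\binom{n+1}{k}_F$, which is the claim.

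\emph{Step 2: reduction to Gaussian binomials.} By Binet's formula, $F_m=\phi^{m-1}[m]_q$ with $q=-\phi^{-2}$, where $[m]_q=(1-q^m)/(1-q)$. Hence
$$
\binom{n}{m}_F=\phi^{m(n-m)}\begin{bmatrix}n\\ m\end{bmatrix}_q .
$$
I would check this exponent carefully: the powers $\phi^{m-1}$ contribute $\sum(m-1)$ over numerator minus denominator, which equals $m(n-m)$. Two classical identities are then available:
$$
\sum_{k\ge0}\begin{bmatrix}n+k\\ k\end{bmatrix}_q t^k=\prod_{j=0}^{n}\frac{1}{1-q^jt},
\qquad
\prod_{j=0}^{n}(1-q^jt)=\sum_{j=0}^{n+1}(-1)^jq^{\binom j2}\begin{bmatrix}n+1\\ j\end{bmatrix}_q t^j .
$$
These identities show that the two series are reciprocal.

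\emph{Step 3: substitution.} Set $t=\phi^{n}x$. Then $\begin{bmatrix}n+k\\ k\end{bmatrix}_q t^k=\binom{n+k}{n}_Fx^k$, so the first series becomes exactly $A(x)$. In the product, the $j$-th coefficient becomes
$$
(-1)^j(-1)^{\binom j2}\phi^{-j(j-1)}\,\phi^{-j(n+1-j)}\,\phi^{nj}\binom{n+1}{j}_F
=(-1)^j(-1)^{\binom j2}\binom{n+1}{j}_F,
$$
since the exponents of $\phi$ cancel: $-j^2+j-jn-j+j^2+nj=0$. This is precisely the reciprocal series required in Step 1, which finishes the proof. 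The stated case table for $(-1)^{\binom k2}$ is just the parity of $k(k-1)/2$ modulo $4$ classes.

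\emph{Main obstacle.} The main risk is the bookkeeping of signs and powers of $\phi$, namely the $(-1)^{\binom j2}$ arising from $q^{\binom j2}$ with $q$ negative, and the verification that all $\phi$-powers cancel. I would confirm the result on the cases $k=1,2$ with small $n$. For example, with $k=2$ one has $D_2=a_1^2-a_2$, which should equal $-\binom{n+1}{2}_F$.

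If one prefers to avoid $q$-series, an alternative for Step 2 is to prove the convolution identity $\sum_{j}(-1)^{j+\binom j2}\binom{n+1}{j}_F\binom{n+k-j}{n}_F=0$ for $k\ge1$ directly, by induction on $n$ using (\ref{recf:fibonom}). I would expect this route to be messier.
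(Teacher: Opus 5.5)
Your proof is correct and is essentially the paper's argument. The paper's Lemma~\ref{lem2} proves exactly your reciprocity, for $\sum_l(-1)^l\binom{n+l}{n}_F z^l$ and $\sum_m(-1)^{\binom{m}{2}}\binom{n+1}{m}_F z^m$ (your two series under $x\mapsto -z$), using $\binom{N}{K}_F=\alpha^{K(N-K)}\binom{N}{K}_q$ with $q=\beta/\alpha$ and the same two $q$-binomial theorems; it then gets the determinant by repeated first-row expansion and induction, which is your Hessenberg recursion.

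One correction to your closing remark: the case table printed in the statement is reversed. In fact $(-1)^{\binom{k}{2}}=+1$ for $k\equiv 0,1$ and $-1$ for $k\equiv 2,3 \pmod 4$, as your own check $D_2=-\binom{n+1}{2}_F$ already shows. So what you prove is the formula with $(-1)^{\binom{k}{2}}$, and you should flag the table as a misprint rather than endorse it.
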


In order to prove (\ref{det:binom1}), we need the following relation.  Then the actual proof is similarly done to that of Theorem \ref{th1}.  

\begin{Lem}
$$
\sum_{l=0}^k(-1)^{l}\binom{n+l}{n}\binom{n+1}{k-l}=0\,.
$$
\label{lem1}
\end{Lem}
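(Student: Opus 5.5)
Lemma \ref{lem1} is a convolution identity, so I would prove it with generating functions. Put $A(x)=\sum_{l\ge 0}\binom{n+l}{n}x^l=(1-x)^{-(n+1)}$ and $B(x)=\sum_{j\ge 0}\binom{n+1}{j}x^j=(1+x)^{n+1}$. Replacing $x$ by $-x$ in $A$ gives
$$
A(-x)=\sum_{l\ge0}(-1)^l\binom{n+l}{n}x^l=(1+x)^{-(n+1)}.
$$
The sum in the lemma is exactly the coefficient of $x^k$ in the product $A(-x)B(x)$. That product equals $(1+x)^{-(n+1)}(1+x)^{n+1}=1$. Hence the coefficient of $x^k$ vanishes for every $k\ge 1$. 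For $k=0$ the sum equals $1$, so the lemma is to be read for $k\ge 1$; this is the range used in the determinant argument.

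The only point needing care is the first step, the negative binomial expansion $(1-x)^{-(n+1)}=\sum_{l}\binom{n+l}{n}x^l$. I would justify it by induction on $n$. The case $n=0$ is the geometric series. For the inductive step, multiply the series for $(1-x)^{-n}$ by $1/(1-x)$ and use the hockey-stick identity
$$
\sum_{i=0}^{l}\binom{n-1+i}{n-1}=\binom{n+l}{n}.
$$
Since everything here is an identity of formal power series, no convergence issues arise.

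A purely combinatorial alternative is also available. Write $\binom{n+l}{n}=(-1)^l\binom{-n-1}{l}$. The sum then becomes $\sum_{l}\binom{-n-1}{l}\binom{n+1}{k-l}$. By the Chu--Vandermonde identity, which holds for arbitrary upper arguments, this equals $\binom{0}{k}=\delta_{k,0}$. I would present the generating-function version and mention this one as a remark.

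The lemma is then used in the determinant proof of (\ref{det:binom1}) by induction on $k$. Expanding the Hessenberg determinant along its last row gives the recurrence
$$
D_k=\sum_{l=1}^{k}(-1)^{l-1}\binom{n+l}{n}D_{k-l},\qquad D_0=1.
$$
Lemma \ref{lem1} says precisely that $D_j=\binom{n+1}{j}$ satisfies this recurrence, which closes the induction. I expect no real obstacle. The one thing to check is that the sign convention from the cofactor expansion matches the $(-1)^l$ in the lemma; this can be confirmed on $k=1$ and $k=2$.
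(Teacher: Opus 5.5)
Your proof is correct. The paper proves the lemma in one line, and that line is your closing remark: it rewrites $(-1)^l\binom{n+l}{n}=\binom{-n-1}{l}$ and applies Chu--Vandermonde to get $\binom{0}{k}$.

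Your main argument compares coefficients in $(1+x)^{-(n+1)}(1+x)^{n+1}=1$. That is the generating-function proof of this particular instance of Vandermonde, written out explicitly. So the two routes rest on the same identity, packaged differently.

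Your version has two advantages:
\begin{enumerate}
\item It is self-contained. It does not rely on Vandermonde with a negative upper argument, and you justify the negative binomial series by induction with the hockey-stick identity.
\item It runs parallel to the paper's proof of the Fibonomial analogue, Lemma~\ref{lem2}. There $A(z)B(z)=1$ is used, with $q$-binomial products in place of $(1\pm x)^{\pm(n+1)}$.
\end{enumerate}

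Your caveat about $k=0$ is right: the sum equals $1$ there. The paper's final step $\binom{0}{k}=0$ tacitly assumes $k\ge 1$. That restriction is written out only in the proof of Lemma~\ref{lem2}.

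Your last-row expansion $D_k=\sum_{l=1}^{k}(-1)^{l-1}\binom{n+l}{n}D_{k-l}$ has the correct signs. The lemma therefore closes the induction for (\ref{det:binom1}) as you describe.
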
  
\begin{proof} 
By using the Chu-Vandermonde identity, the left-hand side can be written as 
$$  
\sum_{l=0}^k\binom{-n-1}{l}\binom{n+1}{k-l}=\binom{0}{k}=0\,,
$$
which is the right-hand side.  
\end{proof}

Similarly to Lemma \ref{lem1}, we need the followin relation as Fibonomial version.  

\begin{Lem}
For $\delta_k=(-1)^{\binom{k}{2}}$, 
$$
\sum_{l=0}^k(-1)^{l}\binom{n+l}{n}_F\delta_{k-l}\binom{n+1}{k-l}_F=0\,.
$$
\label{lem2}
\end{Lem}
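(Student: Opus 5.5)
The plan is to treat the Fibonomial coefficients as specialized Gaussian ($q$-)binomial coefficients and derive the identity from the $q$-analogue of the generating-function argument behind Lemma~\ref{lem1}. Throughout I assume $k\ge 1$, as in Lemma~\ref{lem1}; for $k=0$ the sum is $\delta_0=1$.

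Let $\alpha=(1+\sqrt5)/2$, $\beta=(1-\sqrt5)/2$ and $q=\beta/\alpha=-\alpha^{-2}$. Binet's formula gives $F_m=\alpha^{m-1}[m]_q$ with $[m]_q=(1-q^m)/(1-q)$. Substituting this into (\ref{def:fibonom}), the powers of $\alpha$ collect as $\sum_{i\le n}(i-1)-\sum_{i\le m}(i-1)-\sum_{i\le n-m}(i-1)=m(n-m)$. Hence
$$
\binom{n}{m}_F=\alpha^{m(n-m)}\binom{n}{m}_q ,
$$
where $\binom{n}{m}_q$ is the Gaussian binomial coefficient.

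Next I use the two classical $q$-binomial expansions:
$$
\prod_{i=0}^{n}\frac{1}{1+xq^i}=\sum_{l\ge0}(-1)^l\binom{n+l}{n}_q x^l,\qquad \prod_{i=0}^{n}(1+xq^i)=\sum_{j=0}^{n+1}q^{\binom{j}{2}}\binom{n+1}{j}_q x^j .
$$
Their product is $1$, so comparing coefficients of $x^k$ gives, for $k\ge1$,
$$
\sum_{l=0}^k(-1)^l\binom{n+l}{n}_q\, q^{\binom{k-l}{2}}\binom{n+1}{k-l}_q=0 ,
$$
which is the exact $q$-analogue of the Chu--Vandermonde step in Lemma~\ref{lem1}.

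The step requiring care is converting back to Fibonomials, where the $\alpha$-powers must turn out independent of $l$. Put $j=k-l$. Then
$$
\binom{n+l}{n}_q=\alpha^{-nl}\binom{n+l}{n}_F,\qquad \binom{n+1}{j}_q=\alpha^{-j(n+1-j)}\binom{n+1}{j}_F,\qquad q^{\binom{j}{2}}=(-1)^{\binom j2}\alpha^{-j(j-1)}=\delta_j\,\alpha^{-j(j-1)} .
$$
The total exponent of $\alpha$ is therefore $-nl-j(n+1)+j^2-j^2+j=-n(l+j)=-nk$, which is constant in $l$. So the $q$-identity equals $\alpha^{-nk}$ times the sum in Lemma~\ref{lem2}, and that sum must vanish. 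This also explains where the sign factor $\delta_{k-l}$ comes from: it is the sign part of $q^{\binom{k-l}{2}}$, since $q$ is negative.
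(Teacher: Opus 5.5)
Your proposal is correct and follows essentially the same route as the paper. Both rewrite Fibonomials as $\alpha^{K(N-K)}\binom{N}{K}_q$ with $q=\beta/\alpha$, multiply the two $q$-binomial expansions to get $1$, and compare coefficients for $k\ge 1$. The only difference is cosmetic: the paper absorbs the $\alpha$-powers by substituting $x=\pm\alpha^n z$ into the generating functions, while you track them coefficientwise and pull out the common factor $\alpha^{-nk}$.
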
  
\begin{proof}
Consider the formal power series 
\begin{align*}  
A(z)&=\sum_{l=0}^\infty(-1)^{l}\binom{n+l}{n}_F z^l\,,\\ 
B(z)&=\sum_{m=0}^{n+1}\delta_m\binom{n+1}{m}_F z^m\,. 
\end{align*}
Note that Fibonacci numbers $F_m$ are given by 
$$
F_m=\frac{\alpha^m-\beta^m}{\alpha-\beta}\,,
$$ 
where 
$$
\alpha=\frac{1+\sqrt{5}}{2}\quad\hbox{and}\quad \beta=\frac{1-\sqrt{5}}{2}\,. 
$$ 
By using the Gaussian binomial coefficient 
$$
\binom{N}{K}_q=\prod_{r=1}^K\frac{1-q^{N-r+1}}{1-q^r}\,,
$$ 
we get for $q=\beta/\alpha$ 
\begin{align}
\binom{N}{K}_F&=\prod_{r=1}^K\frac{F_{N-r+1}}{F_r}=\prod_{r=1}^K\frac{\alpha^{N-r+1}-\beta^{N-r+1}}{\alpha^r-\beta^r}\notag\\
&=\prod_{r=1}^K\frac{\alpha^{N-r+1}(1-q^{N-r+1})}{\alpha^r(1-q^r)}\notag\\
&=\alpha^{K(N-K)}\binom{N}{K}_q\,. 
\label{eq:65}
\end{align}
Hence, by (\ref{eq:65}) 
$$
A(z)=\sum_{l=0}^\infty(-1)^l\alpha^{n l}\binom{n+l}{n}_q z^l
$$ 
Since a $q$-binomial theorem is 
$$
\prod_{j=0}^{N-1}\frac{1}{1-x q^j}=\sum_{k=0}^\infty\binom{N+k-1}{k}_q x^k\,, 
$$
substituting $x=-\alpha^n z$ and $N=n+1$, we have 
$$
A(z)=\prod_{j=0}^n\frac{1}{1+\alpha^n q^j z}=\prod_{j=0}^n\frac{1}{1+\alpha^{n-j}\beta^j z}\,.
$$

By applying (\ref{eq:65}) and $\alpha\beta=-1$, 
\begin{align*}
B(z)&=\sum_{m=0}^{n+1}(-1)^{\binom{m}{2}}\alpha^{m(n+1-m)}\binom{n+1}{m}_q z^m\\
&=\sum_{m=0}^{n+1}(\alpha\beta)^{\binom{m}{2}}\alpha^{m(n+1-m)}\binom{n+1}{m}_q z^m\\
&=\sum_{m=0}^{n+1}q^{\binom{m}{2}}\alpha^{m n}\binom{n+1}{m}_q z^m\,.
\end{align*}
Since another $q$-binomial theorem is 
$$
\prod_{j=0}^{N-1}(1+x q^j)=\sum_{m=0}^N q^{\frac{m(m-1)}{2}}\binom{N}{m}_q x^m\,,
$$
substituting $x=\alpha^n z$ and $N=n+1$, we have 
$$
B(z)=\prod_{j=0}^n(1+\alpha^n q^j z)=\prod_{j=0}^n(1+\alpha^{n-j}\beta^j z)\,.
$$

Therefore,  
\begin{align*}  
1&=A(z)B(z)\\
&=\sum_{k=0}^\infty\sum_{l=0}^k(-1)^{l}\binom{n+l}{n}_F(-1)^{\binom{k-l}{2}}\binom{n+1}{k-l}_F z^k\,. 
\end{align*} 
Comparing the coefficients on both sides, for $k\ge 1$, 
$$
\sum_{l=0}^k(-1)^{l}\binom{n+l}{n}_F(-1)^{\binom{k-l}{2}}\binom{n+1}{k-l}_F=0\,. 
$$ 
\end{proof}

\begin{proof}[Proof of Theorem \ref{th1}.]
When $k=1$, 
$$
{\rm RHS}=\binom{n+1}{n}_F=\binom{n+1}{1}_F={\rm LHS}\,.
$$ 
Assume that the identity (\ref{det:fibonom1}) holds for $k=1,2,\dots,\kappa-1$. 
Repeatedly expanding the determinant of the right-hand side of equation (\ref{det:fibonom1}) when $k=\kappa$ along the first row gives 
\begin{align*}
&\binom{n+1}{n}_F\delta_{\kappa-1}\binom{n+1}{\kappa-1}_F-\left|\begin{array}{ccccc}
\binom{n+2}{n}_F&1&0&&0\\
\binom{n+3}{n}_F&\binom{n+1}{n}_F&1&&\\
\binom{n+4}{n}_F&\binom{n+2}{n}_F&\binom{n+1}{n}_F&&0\\
\vdots&&\ddots&\ddots&1\\
\binom{n+\kappa}{n}_F&\cdots&\binom{n+3}{n}_F&\binom{n+2}{n}_F&\binom{n+1}{n}_F
\end{array}\right|\\
&=\binom{n+1}{n}_F\delta_{\kappa-1}\binom{n+1}{\kappa-1}_F-\binom{n+2}{n}_F\delta_{\kappa-2}\binom{n+1}{\kappa-2}_F\\
&\quad +\left|\begin{array}{ccccc}
\binom{n+3}{n}_F&1&0&&0\\
\binom{n+4}{n}_F&\binom{n+1}{n}_F&1&&\\
\binom{n+5}{n}_F&\binom{n+2}{n}_F&\binom{n+1}{n}_F&&0\\
\vdots&&\ddots&\ddots&1\\
\binom{n+\kappa}{n}_F&\cdots&\binom{n+3}{n}_F&\binom{n+2}{n}_F&\binom{n+1}{n}_F
\end{array}\right|\\
&=\binom{n+1}{n}_F\delta_{\kappa-1}\binom{n+1}{\kappa-1}_F-\binom{n+2}{n}_F\delta_{\kappa-2}\binom{n+1}{\kappa-2}_F\\
&\quad +\binom{n+3}{n}_F\delta_{\kappa-3}\binom{n+1}{\kappa-3}_F-\cdots+(-1)^\kappa\left|\begin{array}{cc}  
\binom{n+\kappa-1}{n}_F&1\\
\binom{n+\kappa}{n}_F&\binom{n+1}{n}_F
\end{array}\right|\\
&=\sum_{l=1}^{\kappa-1}(-1)^{l-1}\binom{n+l}{n}_F\delta_{\kappa-l}\binom{n+1}{\kappa-l}_F=\delta_\kappa\binom{n+1}{\kappa}_F\,. 
\end{align*}
We used Lemma \ref{lem2} to derive the final equality.  
As the identity (\ref{det:fibonom1}) holds for $k=\kappa$, by induction, the theorem is proved.   
\end{proof}

\subsection{Fibonomial version of the Vandermonde identity}

Related to Lemma \ref{lem2}, we shall give a Fibonomial analogue of the famous Chu-Vandermonde identity
$$
\binom{m+n}{k}=\sum_{j=0}^k\binom{m}{k-j}\binom{n}{j}\,.
$$

\begin{theorem}  
\begin{align*}
\binom{m+n}{k}_F&=\sum_{j=0}^k\binom{m}{k-j}_F\binom{n}{j}_F\alpha^{(n-j)(k-j)}\beta^{(m-k+j)j}\\
&=\sum_{j=0}^k\binom{m}{k-j}_F\binom{n}{j}_F\alpha^{n(k-j)}\beta^{m j}(-1)^{j(k-j)}\,,
\end{align*}
where 
$$
\alpha=\frac{1+\sqrt{5}}{2}\quad\hbox{and}\quad \beta=\frac{1-\sqrt{5}}{2}\,. 
$$ 
\label{th:fibonom-vandermonde}
\end{theorem}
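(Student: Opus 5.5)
Since both sides involve only $\alpha,\beta$ and Fibonomials, I will transfer everything to Gaussian binomials with $q=\beta/\alpha$ using (\ref{eq:65}), $\binom{N}{K}_F=\alpha^{K(N-K)}\binom{N}{K}_q$. The $q$-analogue of the Chu--Vandermonde identity I will invoke is
$$
\binom{m+n}{k}_q=\sum_{j=0}^k\binom{m}{k-j}_q\binom{n}{j}_q\,q^{j(m-k+j)}\,.
$$
It follows by comparing coefficients of $x^k$ in
$$
\prod_{i=0}^{m+n-1}(1+xq^i)=\prod_{i=0}^{m-1}(1+xq^i)\prod_{i=0}^{n-1}(1+xq^{m}q^i)\,,
$$
where each factor is expanded with the second $q$-binomial theorem quoted in the proof of Lemma \ref{lem2}. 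In the second factor the variable is $xq^m$. The coefficient of $x^k$ is then
$$
\sum_j q^{\binom{k-j}{2}+\binom{j}{2}+mj}\binom{m}{k-j}_q\binom{n}{j}_q\,,
$$
while the left side has $q^{\binom{k}{2}}$. Using $\binom{k}{2}=\binom{k-j}{2}+\binom{j}{2}+j(k-j)$, the exponent difference is $mj-j(k-j)=j(m-k+j)$, as claimed.

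Next I multiply the $q$-identity by $\alpha^{k(m+n-k)}$. The left side becomes $\binom{m+n}{k}_F$. On the right I rewrite each term as
$$
\binom{m}{k-j}_F\binom{n}{j}_F\,\alpha^{E}q^{j(m-k+j)},\qquad E=k(m+n-k)-(k-j)(m-k+j)-j(n-j)\,.
$$
Since $q^{j(m-k+j)}=\beta^{j(m-k+j)}\alpha^{-j(m-k+j)}$, the total power of $\alpha$ is $E-j(m-k+j)$. Expanding, I expect it to simplify to $(n-j)(k-j)$. A quick check: at $j=0$ it gives $k(m+n-k)-k(m-k)=kn$, consistent with $(n-0)(k-0)$. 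I will verify the general case by direct polynomial expansion in $j$. This yields the first form, $\alpha^{(n-j)(k-j)}\beta^{(m-k+j)j}$.

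For the second form, I expand $\alpha^{(n-j)(k-j)}\beta^{(m-k+j)j}=\alpha^{n(k-j)}\beta^{mj}\cdot\alpha^{-j(k-j)}\beta^{-(k-j)j}$. The extra factor is $(\alpha\beta)^{-j(k-j)}=(-1)^{j(k-j)}$, using $\alpha\beta=-1$.

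The main obstacle is the exponent bookkeeping: getting the $q$-Vandermonde weight exactly right (whether it is $q^{j(m-k+j)}$ or $q^{(k-j)(n-j)}$, depending on which factor is shifted by $q^m$), and confirming that the $\alpha$-power collapses to $(n-j)(k-j)$. I will test the result on small cases, such as $m=n=k=1$, where it gives $F_2=1=\alpha^{1}+\beta^{1}$, which holds.
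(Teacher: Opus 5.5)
Your proof is correct and follows the paper's own route: convert the $q$-Vandermonde identity via $\binom{N}{K}_F=\alpha^{K(N-K)}\binom{N}{K}_q$ with $q=\beta/\alpha$. Your derivation of $q$-Vandermonde by splitting $\prod(1+xq^i)$ mirrors the paper's alternative generating-function proof, and the $\alpha$-exponent check you left pending does close: $(k-j)(m-k+j)+j(m-k+j)=k(m-k+j)$, so the total $\alpha$-power is $k(m+n-k)-k(m-k+j)-j(n-j)=(n-j)(k-j)$.
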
  
\begin{proof}
The identity can be obtained directly by applying (\ref{eq:65}) to the $q$-Vandermonde identity: 
$$
\binom{m+n}{k}_q=\sum_{j=0}^k\binom{m}{k-j}_q\binom{n}{j}_q q^{j(m-k+j)}
$$ 
(see, e.g., \cite[p.190, No.100]{Stanley1}). 
\end{proof}

Nevertheless, here is another proof by using the generating function.  

\begin{proof}[Another proof of Theorem \ref{th:fibonom-vandermonde}.]  
Using Lemma \ref{gf:fibonom} below, 
\begin{align*}
&\sum_{k=0}^n\binom{m+n}{k}_F(-1)^{\binom{k}{2}}z^k\\
&=\prod_{j=0}^{m+n-1}(1+\alpha^{m+n-1-j}\beta^j z)\\
&=\prod_{j=0}^{m}(1+\alpha^{m+n-1-j}\beta^j z)\prod_{j=m}^{m+n-1}(1+\alpha^{m+n-1-j}\beta^j z)\\
&=\prod_{j=0}^{m}(1+\alpha^{m+n-1-j}\beta^j z)\prod_{\ell=0}^{n}(1+\alpha^{n-1-\ell}\beta^{m+\ell}z)\\
&=\left(\sum_{r=0}^m\binom{m}{r}_F(-1)^{\binom{r}{2}}(\alpha^n z)^r\right)\left(\sum_{l=0}^n\binom{n}{l}_F(-1)^{\binom{l}{2}}(\beta^m z)^l\right)\\
&=\sum_{r+l=k}\binom{m}{r}_F\binom{n}{l}_F(-1)^{\binom{r}{2}+\binom{l}{2}}\alpha^{n r}\beta^{m l}z^{r+l}\\
&=\sum_{j=0}^k\binom{m}{k-j}_F\binom{n}{j}_F(-1)^{\binom{k-j}{2}+\binom{j}{2}}\alpha^{n(k-j)}\beta^{m j}z^{k}\,. 
\end{align*} 
Since 
$$
\binom{k-j}{2}+\binom{j}{2}-\binom{k}{2}=j(j-k)\,, 
$$ 
comparing the coefficients on both sides, we get the desired result.  
\end{proof}

\begin{Lem}  
$$
\sum_{k=0}^n\binom{n}{k}_F(-1)^{\binom{k}{2}}z^k=\prod_{j=0}^{n-1}(1+\alpha^{n-1-j}\beta^j z)\,. 
$$
\label{gf:fibonom}
\end{Lem}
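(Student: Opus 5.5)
The plan is to reduce Lemma \ref{gf:fibonom} to the second $q$-binomial theorem already used in the proof of Lemma \ref{lem2}. That argument essentially contains the result with $N=n+1$; here we redo it for general $N=n$.

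First, apply (\ref{eq:65}) with $N=n$ and $q=\beta/\alpha$ to each coefficient on the left:
$$
\binom{n}{k}_F=\alpha^{k(n-k)}\binom{n}{k}_q .
$$
Next, rewrite the sign using $\alpha\beta=-1$. We have $(-1)^{\binom{k}{2}}=(\alpha\beta)^{\binom{k}{2}}=q^{\binom{k}{2}}\alpha^{2\binom{k}{2}}=q^{\binom{k}{2}}\alpha^{k(k-1)}$. Combining the powers of $\alpha$ gives
$$
\alpha^{k(n-k)}\alpha^{k(k-1)}=\alpha^{k(n-1)} .
$$
Hence the left-hand side becomes
$$
\sum_{k=0}^n q^{\binom{k}{2}}\binom{n}{k}_q(\alpha^{n-1}z)^k .
$$

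Now apply the finite $q$-binomial theorem
$$
\prod_{j=0}^{N-1}(1+xq^j)=\sum_{m=0}^N q^{\binom{m}{2}}\binom{N}{m}_q x^m
$$
with $N=n$ and $x=\alpha^{n-1}z$. This gives $\prod_{j=0}^{n-1}(1+\alpha^{n-1}q^jz)$. Finally, $\alpha^{n-1}q^j=\alpha^{n-1-j}\beta^j$, which is exactly the right-hand side.

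The only delicate point is the exponent bookkeeping in the sign conversion, namely checking that $k(n-k)+k(k-1)=k(n-1)$. There is also a degenerate case: when $n=0$ both sides equal $1$ (the product is empty), so that case is trivial.

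If one prefers to avoid quoting the $q$-binomial theorem, an alternative is induction on $n$. Write $P_n(z)=\prod_{j=0}^{n-1}(1+\alpha^{n-1-j}\beta^jz)$ and use the factorization $P_{n}(z)=(1+\alpha^{n-1}z)P_{n-1}(\beta z)$, which follows by reindexing the product. Comparing coefficients of $z^k$ then reduces the claim to
$$
\binom{n}{k}_F=\beta^k\binom{n-1}{k}_F+\alpha^{n-k}\binom{n-1}{k-1}_F ,
$$
after absorbing signs via $\binom{k}{2}-\binom{k-1}{2}=k-1$ and $\alpha^{-1}=-\beta$. This Pascal-type identity comes from $F_n=\beta^kF_{n-k}+\alpha^{n-k}F_k$, itself a direct consequence of Binet's formula. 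The $q$-binomial route is shorter, so it is the one I would write.
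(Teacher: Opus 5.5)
Your proof is correct and is essentially the paper's argument. Both convert the coefficients via (\ref{eq:65}) and $\alpha\beta=-1$ (with $q=\beta/\alpha$) and then apply the finite $q$-binomial theorem; the only difference is direction, since the paper starts from the product (expanding it into elementary symmetric functions and using $\sum_{J}q^{\sum_{i\in J}i}=q^{\binom{k}{2}}\binom{n}{k}_q$) and performs your exponent bookkeeping $k(n-k)+k(k-1)=k(n-1)$ in reverse, while you start from the sum as in the proof of Lemma~\ref{lem2}. Your alternative induction via $F_n=\beta^kF_{n-k}+\alpha^{n-k}F_k$ is also valid.
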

\begin{proof}
The right-hand side can be written as  
\begin{align*}
\prod_{j=0}^{n-1}(1+\alpha^{n-1-j}\beta^j z)
&=\sum_{k=0}^n\left(\sum_{0\le i_1<\dots<i_k\le n-1}\prod_{t=1}^k\alpha^{n-1-i_t}\beta^{i_t}\right)z^k\\
&=\sum_{k=0}^{n}\left(\alpha^{k(n-1)}\sum_{J}\left(\frac{\beta}{\alpha}\right)^{\sum_{i\in J}i}\right)z^k\,,
\end{align*}
where the sum runs over all $k$-subsets $J\subset\{0,1,\dots,n-1\}$.  
Since 
$$
\sum_{0\le i_1<\dots<i_k\le n-1}q^{i_1+\cdots+i_k}=q^{\binom{k}{2}}\prod_{i=1}^k\frac{1-q^{n-k+i}}{1-q^i}\,, 
$$ 
for $q=\beta/\alpha=-1/\alpha^2$, the right-hand side is equal to 
\begin{align*}
&\sum_{k=0}^{n}\left(\alpha^{k(n-1)}q^{\binom{k}{2}}\prod_{i=1}^k\frac{1-q^{n-k+i}}{1-q^i}\right)z^k\\
&=\sum_{k=0}^{n}\left(\alpha^{k(n-1)}\left(-\frac{1}{\alpha^2}\right)^{\binom{k}{2}}\prod_{i=1}^k\frac{1-(\beta/\alpha)^{n-k+i}}{1-(\beta/\alpha)^i}\right)z^k\\
&=\sum_{k=0}^{n}\left((-1)^{\binom{k}{2}}\prod_{i=1}^k\frac{\alpha^{n-k+i}-\beta^{n-k+i}}{\alpha^i-\beta^i}\right)z^k\\
&=\sum_{k=0}^{n}\binom{n}{k}_F(-1)^{\binom{k}{2}}z^k\,.
\end{align*} 
Here, we used an expression in (\ref{eq:65}). 
\end{proof}

\section{Applications of the inversion formula}

By using the inversion formula explained as follows (see, e.g., \cite{KR18}), we can get an expression in which the inner and outer elements are interchanged.  

\begin{Lem}  
For two sequences $\{\alpha_n\}_{n\ge 0}$ and $\{\beta_n\}_{n\ge 0}$ with $\alpha_0=\beta_0=1$, we have for $n\ge 1$
\begin{align*}
&\alpha_n=\left|\begin{array}{ccccc}
\beta_1&1&0&\cdots&0\\
\beta_2&\beta_1&1&&\vdots\\
\vdots&&\ddots&&0\\
\beta_{n-1}&&&\beta_1&1\\
\beta_n&\beta_{n-1}&\cdots&\beta_2&\beta_1
\end{array}\right|\\
&\Longleftrightarrow\quad 
\sum_{k=0}^n(-1)^{n-k}\alpha_k\beta_{n-k}=0\\
&\Longleftrightarrow\quad 
\beta_n=\left|\begin{array}{ccccc}
\alpha_1&1&0&\cdots&0\\
\alpha_2&\alpha_1&1&&\vdots\\
\vdots&&\ddots&&0\\
\alpha_{n-1}&&&\alpha_1&1\\
\alpha_n&\alpha_{n-1}&\cdots&\alpha_2&\alpha_1
\end{array}\right|\,. 
\end{align*}
\label{lem:inversion1} 
\end{Lem}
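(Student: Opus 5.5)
We prove the first equivalence (the determinant for $\alpha_n$ $\Leftrightarrow$ the convolution identity) and deduce the second from the symmetry of the convolution condition. Write $D_n(\beta)$ for the $n\times n$ lower Hessenberg determinant with $\beta_1$ on the diagonal, $1$ on the superdiagonal, and $\beta_{i-j+1}$ in position $(i,j)$ for $i\ge j$. Set $D_0=1$.

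\emph{Step 1: a recurrence for $D_n$.} Expand $D_n(\beta)$ along the first row, exactly as in the proof of Theorem \ref{th1}. The $(1,1)$ entry $\beta_1$ contributes $\beta_1 D_{n-1}(\beta)$. Deleting the first column and using the $1$ in position $(1,2)$ leaves a determinant of the same shape, whose first column is shifted to $\beta_2,\beta_3,\dots$. Iterating this expansion gives
$$
D_n(\beta)=\sum_{l=1}^{n}(-1)^{l-1}\beta_l\,D_{n-l}(\beta)\qquad(n\ge1).
$$
Equivalently, with $D_0=1$,
$$
\sum_{l=0}^{n}(-1)^{l}\beta_l D_{n-l}(\beta)=0 \qquad (n\ge1).
$$

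\emph{Step 2: the first equivalence.} Reindex with $k=n-l$. The identity from Step 1 then says $\sum_{k=0}^n(-1)^{n-k}D_k(\beta)\beta_{n-k}=0$ for all $n\ge1$, with $D_0=1$.

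If $\alpha_k=D_k(\beta)$ for all $k\ge1$, then since also $\alpha_0=1=D_0$, the convolution identity holds for every $n\ge1$.

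Conversely, suppose the identity holds for all $n\ge1$. It determines $\alpha_n$ uniquely from $\alpha_0,\dots,\alpha_{n-1}$, because the coefficient of $\alpha_n$ is $(-1)^0\beta_0=1$. The sequence $D_k(\beta)$ satisfies the same recurrence with the same initial value. Induction on $n$ therefore gives $\alpha_n=D_n(\beta)$.

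Here the equivalence is read as a statement about all $n\ge1$ simultaneously, as is implicit in the lemma.

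\emph{Step 3: the second equivalence.} Substitute $k\mapsto n-k$ in $\sum_{k=0}^n(-1)^{n-k}\alpha_k\beta_{n-k}=0$. This gives $\sum_{k=0}^n(-1)^{k}\alpha_{n-k}\beta_k=0$. Multiplying by $(-1)^n$ turns it into $\sum_{k=0}^n(-1)^{n-k}\beta_k\alpha_{n-k}=0$, which is the same condition with the roles of $\alpha$ and $\beta$ exchanged. Applying Step 2 with $\alpha$ and $\beta$ swapped yields $\beta_n=D_n(\alpha)$.

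\emph{Main obstacle.} The only delicate point is the sign bookkeeping in Step 1. Each peel-off of the first row contributes a factor $-1$ from the cofactor position $(1,2)$ times the entry $1$. One must check that the remaining minor really has the same Hessenberg shape with the first column shifted. The cleanest way to organise this is to prove the general version by induction on the size: let $M_n(c_1,\dots,c_n)$ be the Hessenberg matrix whose first column is $(c_1,\dots,c_n)^{T}$ and whose other columns are those of the $\beta$-matrix. Then
$$
\det M_n(c_1,\dots,c_n)=c_1D_{n-1}(\beta)-\det M_{n-1}(c_2,\dots,c_n).
$$
Unrolling this relation with $c_i=\beta_i$ gives Step 1.
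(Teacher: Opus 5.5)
Your proof is correct. The paper gives no proof of this lemma (it only cites \cite{KR18}), but your Steps 1--2 are the argument the paper uses in the proof of Theorem~\ref{th1}: repeated expansion along the first row, then induction using the convolution identity of Lemma~\ref{lem2}, in the instance $\beta_l=\binom{n+l}{n}_F$, $\alpha_k=\delta_k\binom{n+1}{k}_F$ (notation of that theorem). You also make explicit two points the paper leaves implicit: the equivalences must be read for all $n\ge 1$ simultaneously, and the second equivalence follows from the first because the convolution condition is symmetric in $\alpha$ and $\beta$.
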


Applying Lemma \ref{lem:inversion1} to Theorem \ref{th1}, we obtain the following.

\begin{theorem}  
\begin{multline*}  
\binom{n+k}{k}_F\\
=\left|\begin{array}{ccccc}
\binom{n+1}{1}_F&1&0&&0\\
(-1)^{\binom{2}{2}}\binom{n+1}{2}_F&\binom{n+1}{1}_F&1&&\\
(-1)^{\binom{3}{2}}\binom{n+1}{3}_F&(-1)^{\binom{2}{2}}\binom{n+1}{2}_F&\binom{n+1}{1}_F&&0\\
\vdots&&\ddots&\ddots&1\\
(-1)^{\binom{k}{2}}\binom{n+1}{k}_F&\cdots&(-1)^{\binom{3}{2}}\binom{n+1}{3}_F&(-1)^{\binom{2}{2}}\binom{n+1}{2}_F&\binom{n+1}{1}_F
\end{array}\right|\,. 
\end{multline*}
\label{tha1} 
\end{theorem}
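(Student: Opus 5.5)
We apply Lemma \ref{lem:inversion1} with the sequences
$$
\alpha_j=\binom{n+j}{n}_F=\binom{n+j}{j}_F,\qquad \beta_j=\delta_j\binom{n+1}{j}_F=(-1)^{\binom{j}{2}}\binom{n+1}{j}_F .
$$
Here $\delta_j=(-1)^{\binom{j}{2}}$ as in Lemma \ref{lem2}, and we set $\beta_j=0$ for $j>n+1$. Both sequences start with $\alpha_0=\beta_0=1$, since $\binom{n}{n}_F=\binom{n+1}{0}_F=1$ and $\delta_0=1$. So the hypotheses of the inversion lemma are met.

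The key step is the middle (convolution) condition of Lemma \ref{lem:inversion1}:
$$
\sum_{l=0}^k(-1)^{k-l}\alpha_l\beta_{k-l}=0\qquad(k\ge1).
$$
Up to the global sign $(-1)^k$, this is exactly
$$
\sum_{l=0}^k(-1)^{l}\binom{n+l}{n}_F\delta_{k-l}\binom{n+1}{k-l}_F=0,
$$
which is Lemma \ref{lem2}. The identity $A(z)B(z)=1$ in its proof gives the vanishing for every $k\ge1$. For $k>n+1$ the binomial $\binom{n+1}{k-l}_F$ vanishes whenever $k-l>n+1$, so it also covers the zero-padded $\beta$. Lemma \ref{lem:inversion1} is an equivalence, so this condition gives both determinant identities at once.

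The first equivalence gives $\alpha_k$ as the Hessenberg determinant in the $\beta_j$. Its first column is
$$
\binom{n+1}{1}_F,\ (-1)^{\binom{2}{2}}\binom{n+1}{2}_F,\ \dots,\ (-1)^{\binom{k}{2}}\binom{n+1}{k}_F ,
$$
and the Toeplitz pattern repeats this column down the subdiagonals. This is precisely the matrix in the statement, and its value is $\alpha_k=\binom{n+k}{k}_F$. The third equivalence recovers Theorem \ref{th1}, because $\beta_k=\delta_k\binom{n+1}{k}_F$ and $\delta_k^2=1$. This is consistent and shows we have matched the right sequences.

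The only real obstacle is sign bookkeeping. We must check that the convolution sign $(-1)^{k-l}$ in Lemma \ref{lem:inversion1} differs from the $(-1)^l$ in Lemma \ref{lem2} only by the overall factor $(-1)^k$. We must also check that the Hessenberg matrix in Lemma \ref{lem:inversion1} has $+1$ on the superdiagonal, as in the stated matrix, so that no extra sign appears. If the inversion lemma had not been available, we would instead argue as in the proof of Theorem \ref{th1]}. That is, expand the determinant repeatedly along the first row, use induction on $k$, and close with Lemma \ref{lem2} with the roles of the two families swapped.
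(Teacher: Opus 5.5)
Your proposal is correct and is essentially the paper's argument. The paper obtains this theorem by applying Lemma~\ref{lem:inversion1} to Theorem~\ref{th1}, with the pair $\delta_k\binom{n+1}{k}_F$ and $\binom{n+k}{n}_F$; you enter the same equivalence through its middle convolution condition, which is Lemma~\ref{lem2} up to the global sign $(-1)^k$, so the two routes have the same content (Theorem~\ref{th1} is itself derived from Lemma~\ref{lem2}), and your sign bookkeeping, including $\binom{n+1}{j}_F=0$ for $j>n+1$, checks out.
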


In fact, it is easy to see that we have a slightly generalized expression for real $\alpha(\ne 0)$:  

\begin{theorem}  
\begin{multline*}  
\binom{n+k}{k}_F\\
=\left|\begin{array}{ccccc}
\binom{n+1}{1}_F&\alpha&0&&0\\
\frac{(-1)^{\binom{2}{2}}}{\alpha}\binom{n+1}{2}_F&\binom{n+1}{1}_F&\alpha&&\\
\frac{(-1)^{\binom{3}{2}}}{\alpha^2}\binom{n+1}{3}_F&\frac{(-1)^{\binom{2}{2}}}{\alpha}\binom{n+1}{2}_F&\binom{n+1}{1}_F&&0\\
\vdots&&\ddots&\ddots&\alpha\\
\frac{(-1)^{\binom{k}{2}}}{\alpha^{k-1}}\binom{n+1}{k}_F&\cdots&\frac{(-1)^{\binom{3}{2}}}{\alpha^2}\binom{n+1}{3}_F&\frac{(-1)^{\binom{2}{2}}}{\alpha}\binom{n+1}{2}_F&\binom{n+1}{1}_F
\end{array}\right|\,. 
\end{multline*}
\label{tha1-1} 
\end{theorem}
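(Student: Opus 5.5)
The plan is to reduce Theorem \ref{tha1-1} to Theorem \ref{tha1} by a diagonal similarity that leaves the determinant unchanged. Let $M=(m_{ij})_{1\le i,j\le k}$ be the Hessenberg matrix of Theorem \ref{tha1}. Its entries are
$$
m_{ij}=(-1)^{\binom{i-j+1}{2}}\binom{n+1}{i-j+1}_F \quad(i\ge j),\qquad m_{i,i+1}=1,
$$
and all other entries are zero. The matrix $M'$ in the present statement has entries $m'_{ij}=\alpha^{-(i-j)}m_{ij}$ for $i\ge j$, $m'_{i,i+1}=\alpha$, and zero elsewhere. In every case this reads $m'_{ij}=\alpha^{\,j-i}m_{ij}$, since for the superdiagonal $j-i=1$ gives $\alpha\cdot 1$. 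The key step is therefore to write $M'=D^{-1}MD$ with $D=\mathrm{diag}(\alpha,\alpha^2,\dots,\alpha^k)$, using $(D^{-1}MD)_{ij}=\alpha^{-i}m_{ij}\alpha^{j}$.

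Taking determinants gives $\det M'=\det D^{-1}\det M\det D=\det M$. By Theorem \ref{tha1} this equals $\binom{n+k}{k}_F$, which proves the claim. Here $\alpha\ne0$ is needed only so that $D$ is invertible and the entries $\alpha^{-(i-j)}$ make sense.

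As an independent check, one can verify the identity through the permutation expansion. Every nonzero term in the expansion of a lower Hessenberg determinant is a product $\prod_i m_{i,\sigma(i)}$, and since $\sigma$ is a permutation, $\sum_i(\sigma(i)-i)=0$. So each term of $\det M'$ picks up the total factor $\alpha^{\sum_i(\sigma(i)-i)}=1$ relative to the corresponding term of $\det M$. The same conclusion follows from the recursive first-row expansion used in the proof of Theorem \ref{th1}, which only reproduces Lemma \ref{lem:inversion1} with $\beta_j$ replaced by $\beta_j/\alpha^{j-1}$ and $1$ replaced by $\alpha$.

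There is no real obstacle. The only care needed is the bookkeeping of exponents, checking that the power of $\alpha$ in row $i$, column $j$ is exactly $j-i$ both below and on the superdiagonal. The letter $\alpha$ here is a free nonzero real parameter, not the golden ratio used in Lemma \ref{lem2}.
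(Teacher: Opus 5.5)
Your proof is correct: the entrywise relation $m'_{ij}=\alpha^{j-i}m_{ij}$ gives $M'=D^{-1}MD$ with $D=\mathrm{diag}(\alpha,\alpha^2,\dots,\alpha^k)$, so the determinant is unchanged and Theorem~\ref{tha1} finishes the argument. The paper says only ``it is easy to see'' and likewise deduces the statement from Theorem~\ref{tha1}; your similarity argument, or equivalently the observation $\sum_i(\sigma(i)-i)=0$, is exactly the justification it leaves implicit.
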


\noindent 
{\it Remark.}  
When $\alpha=-1$ and $k=3$, the expression of (\ref{eq:emrah10}) is recovered.

\section{Applications by Trudi's formula}  

The forms of determinant expressions in Theorem \ref{th1} remind us Trudi's formula \cite[Vol. 3, p. 214]{Muir}, \cite{Trudi}, and the case $a_0=1$ of this formula is known as Brioschi's formula \cite[Vol. 3, pp. 208--209]{Muir}, cite{Brioschi}.  

\begin{Lem}  
For a positive integer n, we have 
\begin{align*}  
&\left|\begin{array}{ccccc} 
a_1&a_0&0&\cdots&0\\
a_2&a_1&\vdots&&\vdots\\
\vdots&\vdots&\ddots&\ddots&0\\
a_{n-1}&&\cdots&a_1&a_0\\
a_n&a_{n-1}&\cdots&a_2&a_1
\end{array}\right|\\
&=\sum_{t_1+2 t_2+\cdots+n t_n=n\atop t_1,t_2,\dots,t_n\ge 0}\binom{t_1+\cdots+t_n}{t_1,\dots,t_n}(-a_0)^{n-t_1-\cdots-t_n}a_1^{t_1}a_2^{t_2}\cdots a_n^{t_n}\,,
\end{align*} 
where $\displaystyle\binom{t_1+\cdots+t_n}{t_1,\dots,t_n}=\dfrac{(t_1+\cdots+t_n)!}{t_1!\cdots t_n!}$ are the multinomial coefficients.  
\label{lem:trudi} 
\end{Lem}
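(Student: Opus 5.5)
We prove Trudi's formula (Lemma \ref{lem:trudi}) with generating functions, reducing it to the inversion relation in Lemma \ref{lem:inversion1}. Write $D_n$ for the $n\times n$ Hessenberg determinant and put $D_0=1$. Expanding $D_n$ along its first row, exactly as in the proof of Theorem \ref{th1}, gives the recurrence
$$
D_n=\sum_{k=1}^n(-1)^{k-1}a_0^{k-1}a_kD_{n-k}\qquad(n\ge 1).
$$
To see this, note that deleting the first row and the $k$-th column leaves a block lower-triangular matrix. Its upper-left block is triangular with $a_0$ on the diagonal, which produces the factor $a_0^{k-1}$. Its lower-right block is a copy of $D_{n-k}$.

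We encode the recurrence in generating functions. Set $D(x)=\sum_{n\ge0}D_nx^n$ and $g(x)=\sum_{k\ge1}(-1)^{k-1}a_0^{k-1}a_kx^k$. The recurrence then reads $D(x)=1+g(x)D(x)$, so
$$
D(x)=\frac{1}{1-g(x)}=\sum_{s\ge0}g(x)^s.
$$
The factorization $g(x)=-\sum_{k\ge1}a_k(-a_0x)^k/a_0$ is inconvenient if $a_0=0$. Instead we expand $g(x)^s$ directly by the multinomial theorem:
$$
g(x)^s=\sum_{t_1+\cdots+t_n=s}\binom{s}{t_1,\dots,t_n}\prod_k\bigl((-1)^{k-1}a_0^{k-1}a_k\bigr)^{t_k}x^{\sum kt_k}.
$$
Taking the coefficient of $x^n$ restricts the sum to $\sum_k kt_k=n$. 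On that set, the power of $a_0$ is $\sum_k(k-1)t_k=n-s$, and the sign is $(-1)^{\sum_k(k-1)t_k}=(-1)^{n-s}$. Together they give $(-a_0)^{n-t_1-\cdots-t_n}$, which is exactly the stated formula. Since only $k\le n$ can contribute to the coefficient of $x^n$, the indices run over $t_1,\dots,t_n$, as in the statement.

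The only delicate step is the sign and $a_0$-power bookkeeping in the cofactor expansion, where the minor must be identified as $a_0^{k-1}D_{n-k}$ with sign $(-1)^{k+1}$. This can be checked on $n=2$: $D_2=a_1^2-a_0a_2$ matches the formula's terms $(t_1,t_2)=(2,0)$ and $(0,1)$. Alternatively, the recurrence can be read off from Lemma \ref{lem:inversion1}. Scaling row $i$ by $a_0^{-(i-1)}$ and column $j$ by $a_0^{j-1}$ reduces to the case $a_0=1$, with $\beta_k=a_k/a_0^{k-1}$, where that lemma gives $\sum(-1)^{n-k}D_k\beta_{n-k}=0$. This route confirms the same recurrence, and the case $a_0=0$ follows by continuity or polynomial identity.
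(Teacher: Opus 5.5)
The paper gives no proof of this lemma. It only quotes Trudi's formula from Muir and Trudi, together with Brioschi's $a_0=1$ case, so your argument is an independent, self-contained proof rather than a variant of the paper's. Your route is correct: the recurrence $D_n=\sum_{k=1}^n(-1)^{k-1}a_0^{k-1}a_kD_{n-k}$, then $D(x)=1/(1-g(x))$, then the multinomial expansion of $g(x)^s$ with the bookkeeping $\sum_k(k-1)t_k=n-s$. It never divides by $a_0$, so the case $a_0=0$ is covered directly (with $0^0=1$). It also fits the paper's own machinery. Your recurrence is exactly what the repeated expansion in the proof of Theorem~\ref{th1} produces, and the $a_0=1$ case is one of the equivalences listed in Lemma~\ref{lem:det-cont}. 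The paper's citation simply defers all of this to the classical literature.

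There are two slips to fix.

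First, the cofactor step. The first row of $D_n$ is $(a_1,a_0,0,\dots,0)$, so a single first-row expansion has only two terms. Your block description is correct, but for deleting the $k$-th \emph{row} and the first \emph{column}, that is, for expansion along the first column. In that minor, the upper-left $(k-1)\times(k-1)$ block is lower triangular with $a_0$ on the diagonal, the upper-right block is zero, the lower-right block is $D_{n-k}$, and the sign is $(-1)^{k+1}$. Equivalently, you can iterate the first-row expansion as the paper does.

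Second, the cross-check via Lemma~\ref{lem:inversion1} has the scaling reversed. Multiplying row $i$ by $a_0^{-(i-1)}$ and column $j$ by $a_0^{j-1}$ turns the superdiagonal into $a_0^2$, not $1$. You need row $i$ times $a_0^{i-1}$ and column $j$ times $a_0^{-(j-1)}$, which gives $\beta_k=a_0^{k-1}a_k$. With your $\beta_k=a_k/a_0^{k-1}$, the lemma would give $a_1^2-a_2/a_0\neq D_2$ for $n=2$. Since this was only offered as a cross-check, the main proof is unaffected.
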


By applying Lemma \ref{lem:trudi} to Theorem \ref{th1}, we obtain the following. 
\begin{theorem} 
We have 
\begin{multline*}
\binom{n+1}{k}_F
=\sum_{t_1+2 t_2+\cdots+n t_k=k\atop t_1,t_2,\dots,t_k\ge 0}\binom{t_1+\cdots+t_k}{t_1,\dots,t_k}\\
\times(-1)^{\binom{k}{2}+n-t_1-\cdots-t_k}\binom{n+1}{n}_F^{t_1}\binom{n+2}{n}_F^{t_2}\cdots\binom{n+k}{n}_F^{t_k}\,. 
\end{multline*}
\label{th:5}
\end{theorem}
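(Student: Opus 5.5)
The plan is to combine Theorem \ref{th1} directly with Lemma \ref{lem:trudi}. Theorem \ref{th1} gives
$$
\delta_k\binom{n+1}{k}_F=\det M_k ,
$$
where $\delta_k=(-1)^{\binom{k}{2}}$ and $M_k$ is the $k\times k$ lower Hessenberg Toeplitz matrix. Its subdiagonal-and-below entries are $a_j=\binom{n+j}{n}_F$ for $j=1,\dots,k$, and its superdiagonal entries are all $a_0=1$. Multiplying by $\delta_k$ and using $\delta_k^2=1$ gives $\binom{n+1}{k}_F=\delta_k\det M_k$, so it only remains to expand $\det M_k$.

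For that expansion I would apply Lemma \ref{lem:trudi} with its size parameter (called $n$ there) replaced by $k$, and with $a_0=1$ and $a_j=\binom{n+j}{n}_F$. This is the Brioschi case. The lemma gives
$$
\det M_k=\sum_{t_1+2t_2+\cdots+kt_k=k\atop t_1,\dots,t_k\ge0}\binom{t_1+\cdots+t_k}{t_1,\dots,t_k}(-1)^{k-t_1-\cdots-t_k}\prod_{j=1}^k\binom{n+j}{n}_F^{t_j}.
$$
Multiplying by $\delta_k$ then merges the two signs into a single factor $(-1)^{\binom{k}{2}+k-t_1-\cdots-t_k}$, and the theorem follows.

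The only real point to watch is bookkeeping. In the displayed statement, the constraint $t_1+2t_2+\cdots+nt_k=k$ and the exponent $n-t_1-\cdots-t_k$ both carry the letter $n$ inherited from Lemma \ref{lem:trudi}. Once that lemma is specialized to a $k\times k$ determinant, its $n$ becomes $k$. The constraint must therefore be read as $t_1+2t_2+\cdots+kt_k=k$, and the sign as $(-1)^{\binom{k}{2}+k-\sum t_j}$. The literal exponent $n-\sum t_j$ differs from this by $(-1)^{n-k}$, which is not always $1$. A quick check makes this concrete: at $k=1$ the literal version gives $(-1)^{n-1}F_{n+1}$, whereas $k=1$ gives exactly $\binom{n+1}{1}_F=F_{n+1}$.

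So I would state and prove the identity with the corrected indices. I would verify it on $k=1,2$. For $k=2$ the sum gives $-\bigl(-a_1^2+a_2\bigr)=a_1^2-a_2$, and with $n=1$ this is $1-2=-1$. That matches $\delta_2\det M_2=-(a_1^2-a_2)$ only after the $\delta$ sign is included, which confirms where the factor $(-1)^{\binom{k}{2}}$ goes. No further obstacle is expected, since both ingredients are already established.
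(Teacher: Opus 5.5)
Your argument is correct and matches the paper's route: the paper likewise just specializes Lemma \ref{lem:trudi} to the $k\times k$ matrix of Theorem \ref{th1} with $a_0=1$, $a_j=\binom{n+j}{n}_F$, and absorbs $\delta_k$. You are also right that the displayed $n t_k$ and $n-t_1-\cdots-t_k$ must be read as $k t_k$ and $k-t_1-\cdots-t_k$, and your $k=1$ check shows the literal sign fails for even $n$.

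One small slip is in your $k=2$ sanity check. With the corrected sign the sum is $a_2-a_1^2=\delta_2\det M_2$, which equals $1=\binom{2}{2}_F$ at $n=1$, not $a_1^2-a_2=-1$ as you wrote.
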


By applying Lemma \ref{lem:trudi} to Theorem \ref{tha1}, by 
\begin{align*}
&n-t_1-\cdots-t_k+\binom{1}{2}t_1+\cdot+\binom{k}{2}t_k\\
&\quad =n+\sum_{j=1}^k\frac{(j+1)(j-2)}{2}t_j\,, 
\end{align*}
we obtain the following.  

\begin{theorem} 
We have 
\begin{multline*}
\binom{n+k}{k}_F
=\sum_{t_1+2 t_2+\cdots+n t_k=k\atop t_1,t_2,\dots,t_k\ge 0}\binom{t_1+\cdots+t_k}{t_1,\dots,t_k}\\
\times(-1)^{n+\sum_{j=1}^k\frac{(j+1)(j-2)}{2}t_j}\binom{n+1}{1}_F^{t_1}\binom{n+1}{2}_F^{t_2}\cdots\binom{n+1}{k}_F^{t_k}\,. 
\end{multline*}
\label{th:a5}
\end{theorem}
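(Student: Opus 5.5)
The plan is to read the statement off Theorem \ref{tha1} by expanding its Hessenberg determinant with Trudi's formula, Lemma \ref{lem:trudi}. No new Fibonomial identity is needed: all the Fibonomial content is already in Theorem \ref{tha1}, and what remains is bookkeeping of signs.

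First, I match the determinant in Theorem \ref{tha1} with the one in Lemma \ref{lem:trudi}. It has order $k$ and superdiagonal entries $a_0=1$. The first column gives
$$
a_j=(-1)^{\binom{j}{2}}\binom{n+1}{j}_F\qquad (1\le j\le k),
$$
and since $\binom{1}{2}=0$ this also covers $a_1=\binom{n+1}{1}_F$. Applying Lemma \ref{lem:trudi} with $n$ replaced by the order $k$ gives a sum over $t_1+2t_2+\cdots+kt_k=k$ with $t_j\ge 0$. Each term is
$$
\binom{t_1+\cdots+t_k}{t_1,\dots,t_k}(-1)^{k-t_1-\cdots-t_k}\prod_{j=1}^k\left((-1)^{\binom{j}{2}}\binom{n+1}{j}_F\right)^{t_j}.
$$
Here the factor $(-a_0)^{k-\sum t_j}$ has become a pure sign because $a_0=1$.

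Second, I collect the signs. The total exponent of $-1$ is
$$
k-\sum_{j}t_j+\sum_j\binom{j}{2}t_j=k+\sum_{j=1}^k\left(\binom{j}{2}-1\right)t_j=k+\sum_{j=1}^k\frac{(j+1)(j-2)}{2}t_j,
$$
which is the displayed computation preceding the statement. What remains is $\prod_j\binom{n+1}{j}_F^{t_j}$ with the multinomial coefficient in front, and this is exactly the summand of the theorem.

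The only point needing care, which I expect to be the main obstacle, is matching indices with the printed statement. In Lemma \ref{lem:trudi} the letter $n$ denotes the order of the determinant, and here that order is $k$. So the index constraint should read $t_1+2t_2+\cdots+kt_k=k$, and the leading term in the sign exponent should be $k$, the matrix order, rather than the Fibonomial parameter $n$; the printed statement should be read this way. I would confirm this reading on $k=1$: there $t_1=1$, the sign is $(-1)^{1-1}=+1$, and the right-hand side is $\binom{n+1}{1}_F=\binom{n+1}{1}_F$, as required. I would then check $k=2$ numerically, say with $n=1$, to make sure the sign convention is right. With $k$ in place of $n$ in those two spots, the argument is a direct substitution.
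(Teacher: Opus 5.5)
Your proposal is correct and is exactly the paper's argument: expand the order-$k$ Hessenberg determinant of Theorem~\ref{tha1} by Trudi's formula (Lemma~\ref{lem:trudi}) with $a_0=1$ and $a_j=(-1)^{\binom{j}{2}}\binom{n+1}{j}_F$, then combine the signs using $\binom{j}{2}-1=\frac{(j+1)(j-2)}{2}$. Your reading of the indices is also right: the paper's sign computation reuses the letter $n$ from Lemma~\ref{lem:trudi}, where it denotes the matrix order and should be $k$, and as printed the formula fails whenever $n-k$ is odd (for example, $n=1$, $k=2$ gives $-2$ instead of $\binom{3}{2}_F=2$).
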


\section{An application to continued fractions}  

The determinant representation in the form of a Hessenberg matrix is very useful and versatile. Here we show a direct application to the continued fraction representation of generating functions. See \cite{Ko20} for similar continued fractions.   

First, the continued fraction expansion corresponding to Theorem \ref{th1} is as follows:

\begin{theorem}  
We have 
\begin{align*}
&\sum_{k=0}^{n+1}\binom{n+1}{k}_F(-1)^{\binom{k+1}{2}}x^k\\
&\equiv 1-\cfrac{F_{n+1}x}{1+F_{n+1}x-\cfrac{\frac{F_{n+2}}{F_2}x}{1+\frac{F_{n+2}}{F_2}x-{\atop\ddots-\cfrac{\frac{F_{2 n+1}}{F_{n+1}}x}{1+\frac{F_{2 n+1}}{F_{n+1}}x}}}}\pmod{x^{n+2}}\,. 
\end{align*}
\label{th:cf:fibonom}  
\end{theorem}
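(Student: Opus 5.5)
The plan is to reduce the continued fraction to a simple recursion for its tails, and then to use the generating-function identity $A(z)B(z)=1$ from the proof of Lemma~\ref{lem2}. The partial numerators are $a_jx$ with
$$
a_j=\frac{F_{n+j}}{F_j}=\binom{n+j}{n}_F\Big/\binom{n+j-1}{n}_F ,\qquad j=1,2,\dots,n+1 .
$$
These are the successive ratios of $h_j:=\binom{n+j}{n}_F$, so that $h_j=a_1a_2\cdots a_j$ and $h_0=1$.

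\textbf{Step 1 (tail recursion).} For $j\ge1$, define $C_j$ as the tail of the continued fraction,
$$
C_j=1-\cfrac{a_jx}{1+a_jx-\cfrac{a_{j+1}x}{1+a_{j+1}x-\cdots}} .
$$
The key observation is that the denominator under $a_jx$ equals $a_jx+C_{j+1}$. Hence
$$
C_j=1-\frac{a_jx}{a_jx+C_{j+1}}=\frac{C_{j+1}}{a_jx+C_{j+1}} .
$$
Now put $S_j=\sum_{m\ge0}a_ja_{j+1}\cdots a_{j+m-1}x^m$, so that $S_j=1+a_jxS_{j+1}$. If $C_{j+1}=1/S_{j+1}$, then
$$
C_j=\frac{1}{1+a_jxS_{j+1}}=\frac{1}{S_j}.
$$
Running this downward from the top gives $C_1=1/S_1$, where
$$
S_1=\sum_{k\ge0}\binom{n+k}{n}_F x^k .
$$

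\textbf{Step 2 (truncation).} The displayed fraction stops at level $n+1$, which amounts to replacing $C_{n+2}$ by $1$, i.e.\ replacing $S_{n+2}$ by $1$. This alters $S_{n+1}$ only in degrees $\ge2$, and each further step down multiplies the error by $x$. So the truncated version of $S_1$ agrees with the true $S_1$ modulo $x^{n+2}$. Since $S_1$ is invertible as a power series, the truncated continued fraction is congruent to $1/S_1$ modulo $x^{n+2}$. This bookkeeping is the only delicate point, and I expect it to be the main obstacle; I will handle it by a short induction on the depth of the remaining tail.

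\textbf{Step 3 (identify $1/S_1$).} By the proof of Lemma~\ref{lem2}, $A(z)B(z)=1$. Substituting $z=-x$ gives $A(-x)=S_1$. For $B$ we get
$$
B(-x)=\sum_{m=0}^{n+1}\delta_m(-1)^m\binom{n+1}{m}_F x^m ,
$$
and $\delta_m(-1)^m=(-1)^{\binom{m}{2}+m}=(-1)^{\binom{m+1}{2}}$. Thus $1/S_1$ is exactly the polynomial on the left of Theorem~\ref{th:cf:fibonom}. Combining this with Step 2 yields the stated congruence modulo $x^{n+2}$.

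Equivalently, this argument is Theorem~\ref{th1}, via the inversion of Lemma~\ref{lem:inversion1}, read as the continued-fraction form of the Hessenberg determinant.
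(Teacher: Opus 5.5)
Your proof is correct, but it takes a more direct route than the paper.

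\textbf{The paper's route.} The paper gets the result in two lines. It feeds the Hessenberg determinant of Theorem~\ref{th1} into the fundamental-frame Lemma~\ref{lem:det-cont}, which is cited from elsewhere, taking $g_j=1$ and $h_j=F_{n+j}/F_j$. Then $H_j/G_j=\binom{n+j}{n}_F$ and $f_k=(-1)^k\delta_k\binom{n+1}{k}_F=(-1)^{\binom{k+1}{2}}\binom{n+1}{k}_F$.

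\textbf{Your route.} You prove the relevant half of that lemma yourself. Your tail recursion $C_j=C_{j+1}/(a_jx+C_{j+1})$ shows that the finite continued fraction equals $1/\sum_{k=0}^{n+1}\binom{n+k}{n}_F x^k$ exactly. You then identify the reciprocal through $A(-x)B(-x)=1$ from the proof of Lemma~\ref{lem2}, and never pass through the determinant. Since Theorem~\ref{th1} was itself derived from Lemma~\ref{lem2}, i.e.\ from $AB=1$, you are simply deleting the determinant as a middle step.

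\textbf{What each route buys.} Your argument is self-contained. It also makes the truncation error explicit: the polynomial $S_1^{\mathrm{trunc}}$ agrees with $S_1$ modulo $x^{n+2}$. The paper does not spell this out, even though Lemma~\ref{lem:det-cont} is stated for infinite continued fractions. The paper's route is shorter on the page, and it presents the continued fraction as a corollary of the Hessenberg representation, which is the theme of the paper.

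\textbf{One presentational point.} State Step~1 for the finite tails, starting from $C_{n+2}=1$, rather than for an infinite continued fraction, which you never actually need. Each denominator $a_jx+C_{j+1}$ then has constant term $1$, so every step is a legitimate inversion of formal power series.
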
    
\begin{proof}  
Applying the fundamental Lemma to Theorem \ref{th1}, 
\begin{align*}
&\sum_{k=0}^{n+1}\binom{n+1}{k}_F(-1)^{\binom{k}{2}}(-x)^k\\
&\equiv 1-\cfrac{\binom{n+1}{n}_F x}{1+\binom{n+1}{n}_F x-\cfrac{\frac{\binom{n+2}{n}_F}{\binom{n+1}{n}_F}x}{1+\frac{\binom{n+2}{n}_F}{\binom{n+1}{n}_F}x-{\atop\ddots-\cfrac{\frac{\binom{2 n+1}{n}_F}{\binom{2 n}{n}_F}x}{1+\frac{\binom{2 n+1}{n}_F}{\binom{2 n}{n}_F}x}}}}\pmod{x^{n+2}}\,.
\end{align*}
Since 
$$
\frac{\binom{n+k}{n}_F}{\binom{n+k-1}{n}_F}=\frac{F_{n+k}}{F_k}\quad(k\ge 1)
$$ 
and $(-1)^{\binom{k}{2}}(-1)^k=(-1)^{\binom{k+1}{2}}$, we get the desired result.
\end{proof}

\noindent 
{\bf Example.}  
When $n=6$, 
$$
\sum_{k=0}^7\binom{7}{k}_F(-1)^{\binom{k+1}{2}}x^k=1-13 x-104 x^2+260 x^3+260 x^4-104 x^5-13 x^6+x^7\,.  
$$ 
On the other hand, the corresponding continued fraction yields 
{\small  
\begin{align*}  
&1-\cfrac{F_7 x}{1+F_7 x-\cfrac{\frac{F_8}{F_2}x}{1+\frac{F_8}{F_2}x-\cfrac{\frac{F_9}{F_3}x}{1+\frac{F_9}{F_3}x-\cfrac{\frac{F_{10}}{F_4}x}{1+\frac{F_{10}}{F_4}x-\cfrac{\frac{F_{11}}{F_5}x}{1+\frac{F_{11}}{F_5}x-\cfrac{\frac{F_{12}}{F_6}x}{1+\frac{F_{12}}{F_6}x-\cfrac{\frac{F_{13}}{F_7}x}{1+\frac{F_{13}}{F_7}x}}}}}}}\\
&=1-\cfrac{13 x}{1+13 x-\cfrac{\frac{21}{1}x}{1+\frac{21}{1}x-\cfrac{\frac{34}{2}x}{1+\frac{34}{2}x-\cfrac{\frac{55}{3}x}{1+\frac{55}{3}x-\cfrac{\frac{89}{5}x}{1+\frac{89}{5}x-\cfrac{\frac{134}{8}x}{1+\frac{134}{8}x-\cfrac{\frac{223}{13}x}{1+\frac{223}{13}x}}}}}}}\\
&=1-13 x-104 x^2+260 x^3+260 x^4-104 x^5-13 x^6+x^7+8771626578 x^8+\cdots\,. 
\end{align*}
}

Next, the continued fraction expansion corresponding to Theorem \ref{tha1} is as follows:

\begin{theorem}  
We have 
\begin{align*}
&\sum_{k=0}^{\infty}\binom{n+k}{k}_Fx^k\\
&\equiv 1+\cfrac{F_{n+1}x}{1-F_{n+1}x-\cfrac{\frac{F_{n}}{F_2}x}{1+\frac{F_{n}}{F_2}x-{\atop\ddots-(-1)^n\cfrac{\frac{F_{1}}{F_{n+1}}x}{1+(-1)^n\frac{F_{1}}{F_{n+1}}x}}}}\pmod{x^{n+2}}\,. 
\end{align*}
\label{th:cf:afibonom}  
\end{theorem}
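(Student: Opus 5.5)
The plan is to imitate the proof of Theorem \ref{th:cf:fibonom}: apply the same fundamental lemma, which converts a Hessenberg determinant into a continued fraction, to the determinant of Theorem \ref{tha1} rather than to that of Theorem \ref{th1}. Set
$$
\beta_j=(-1)^{\binom{j}{2}}\binom{n+1}{j}_F\quad(j\ge 0),\qquad \alpha_k=\binom{n+k}{k}_F .
$$
Then $\beta_0=\alpha_0=1$, and Theorem \ref{tha1} says exactly that $\alpha_k$ is the $k\times k$ Hessenberg determinant with entries $\beta_1,\beta_2,\dots$, i.e.\ the first relation of Lemma \ref{lem:inversion1}. The fundamental lemma, in the form used in the proof of Theorem \ref{th:cf:fibonom}, then gives
$$
\sum_{k\ge 0}\alpha_k(-x)^k\equiv 1-\cfrac{\beta_1x}{1+\beta_1x-\cfrac{\frac{\beta_2}{\beta_1}x}{1+\frac{\beta_2}{\beta_1}x-\ddots}} ,
$$
where the continued fraction is truncated after the level involving $\beta_{n+1}/\beta_n$.

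Next I compute the partial quotients explicitly. From the definition (\ref{def:fibonom}),
$$
\binom{n+1}{k}_F\Big/\binom{n+1}{k-1}_F=\frac{F_{n+2-k}}{F_k}.
$$
The sign factor is $(-1)^{\binom{k}{2}-\binom{k-1}{2}}=(-1)^{k-1}$. Hence
$$
\frac{\beta_k}{\beta_{k-1}}=(-1)^{k-1}\frac{F_{n+2-k}}{F_k}\qquad(1\le k\le n+1),
$$
which runs through $F_{n+1}$, $-F_n/F_2$, $F_{n-1}/F_3$, and so on, ending with $\pm F_1/F_{n+1}$.

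Then I replace $x$ by $-x$ to pass from $\sum\alpha_k(-x)^k$ to $\sum\alpha_k x^k$. Each level $-\frac{c\,x}{1+c\,x}$ becomes $-\frac{(-c)x}{1+(-c)x}$. The first level becomes $+\frac{F_{n+1}x}{1-F_{n+1}x}$. The second, with $c=-F_n/F_2$, becomes $-\frac{(F_n/F_2)x}{1+(F_n/F_2)x}$, which matches the displayed statement. The later levels follow the same alternating pattern.

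Finally I justify the modulus $x^{n+2}$. We have $\beta_j=0$ for $j>n+1$, so the fundamental lemma's continued fraction terminates at depth $n+1$. It reproduces the series $\sum_k \alpha_k x^k$ only up to degree $n+1$, which gives the congruence mod $x^{n+2}$.

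The main obstacle I expect is purely the sign bookkeeping. I must track carefully the interplay of $(-1)^{\binom{k}{2}}$, the factor $(-1)^{k-1}$ in the ratios, and the substitution $x\mapsto -x$, to confirm that the last level carries exactly the sign $(-1)^n$ written in the statement. My computation suggests the last coefficient is $(-1)^{n+1}F_1/F_{n+1}$ before the sign is absorbed into the "$-$" in front of the fraction. So I would verify the convention by checking small cases $n=1,2$ directly against $\binom{n+k}{k}_F$, and adjust the sign convention of the displayed fraction accordingly.
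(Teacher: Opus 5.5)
Your proposal is correct and is the paper's own proof: apply the fundamental lemma to the Hessenberg determinant of Theorem~\ref{tha1}, then use $\beta_k/\beta_{k-1}=(-1)^{k-1}F_{n+2-k}/F_k$. Your explicit passage through $\sum_k\alpha_k(-x)^k$ followed by $x\mapsto -x$ is more careful than the paper's intermediate display, which writes $x$ where $-x$ is meant. Also, since $\beta_{n+2}=0$, the depth-$(n+1)$ fraction equals $1/\sum_{j=0}^{n+1}\beta_j(-x)^j=\sum_k\binom{n+k}{k}_Fx^k$ exactly, not merely up to degree $n+1$.

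Your sign computation is right and needs no ``adjusting''. The last level is $-\frac{c\,x}{1+c\,x}$ with $c=(-1)^{n+1}F_1/F_{n+1}$. So both printed occurrences of $(-1)^n$ should read $(-1)^{n+1}$; this is a typo in the statement that no sign convention absorbs. The paper's own $n=7$ example shows $-\frac{(F_1/F_8)x}{1+(F_1/F_8)x}$. Reading $(-1)^n$ literally at $n=2$ gives $1/(1-2x-2x^2-x^3)$, whose $x^3$-coefficient is $17\neq\binom{5}{3}_F=15$.
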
    
\begin{proof}  
Applying the fundamental Lemma to Theorem \ref{tha1}, 
\begin{align*}
&\sum_{k=0}^{\infty}\binom{n+k}{k}_F x^k\\
&\equiv 1-\cfrac{\delta_1\binom{n+1}{1}_F x}{1+\delta_1\binom{n+1}{1}_F x-\cfrac{\frac{\delta_2\binom{n+1}{2}_F}{\delta_1\binom{n+1}{1}_F}x}{1+\frac{\delta_2\binom{n+1}{2}_F}{\delta_1\binom{n+1}{1}_F}x-{\atop\ddots-\cfrac{\frac{\delta_{n+1}\binom{n+1}{n+1}_F}{\delta_n\binom{n+1}{n}_F}x}{1+\frac{\delta_{n+1}\binom{n+1}{n+1}_F}{\delta_n\binom{n+1}{n}_F}x}}}}\pmod{x^{n+2}}\,.
\end{align*}
Since 
$$
\frac{(-1)^{\binom{k+1}{2}}\binom{n+1}{k+1}_F}{(-1)^{\binom{k}{2}}\binom{n+1}{k}_F}=(-1)^{k^2}\frac{F_{n-k+1}}{F_{k+1}}=(-1)^k\frac{F_{n-k+1}}{F_{k+1}}\quad(1\le k\le n)\,,
$$ 
we get the desired result.
\end{proof}

\noindent 
{\bf Example.}  
When $n=7$, 
\begin{multline*}
\sum_{k=0}^\infty\binom{7+k}{k}_F x^k=1+21 x+714 x^2+19635 x^3+582505 x^4+16776144 x^5\\
+488605194 x^6+14169550626 x^7+411591708660 x^8+\cdots\,.  
\end{multline*}
On the other hand, the corresponding continued fraction yields 
{\small  
\begin{align*}  
&1+\cfrac{F_8 x}{1-F_8 x-\cfrac{\frac{F_7}{F_2}x}{1+\frac{F_7}{F_2}x+\cfrac{\frac{F_6}{F_3}x}{1-\frac{F_6}{F_3}x-\cfrac{\frac{F_5}{F_4}x}{1+\frac{F_5}{F_4}x+\cfrac{\frac{F_4}{F_5}x}{1-\frac{F_4}{F_5}x-\cfrac{\frac{F_3}{F_6}x}{1+\frac{F_3}{F_6}x+\cfrac{\frac{F_2}{F_7}x}{1-\frac{F_2}{F_7}x-\cfrac{\frac{F_1}{F_8}x}{1+\frac{F_1}{F_8}x}}}}}}}}\\
&=1+21 x+714 x^2+19635 x^3+582505 x^4+16776144 x^5\\
&\quad +488605194 x^6+14169550626 x^7+411591708660 x^8+\cdots\,. 
\end{align*} 
}

\subsection{Fundamental frame}   

By combining the methods between determinants and continued fractions in \cite{Ko20,Ko21}, we have the following equivalent relations (\cite[Theorem 1]{KKal26a}).  

\begin{Lem}  
For three sequences $\{f_n\}_{n\ge 0}$ (with $f_0=0$), $\{g_n\}_{n\ge 1}$ and $\{h_n\}_{n\ge 1}$, we have 
\begin{align*}  
&\sum_{n=0}^\infty f_n x^n=\left(\sum_{j=0}^\infty\frac{h_1\cdots h_j}{g_1\cdots g_j}x^j\right)^{-1}:=\left(\sum_{j=0}^\infty\frac{H_j}{G_j}x^j\right)^{-1}\\
&=1-\cfrac{h_1 x}{g_1+h_1 x-\cfrac{g_1 h_2 x}{g_2+h_2 x-\cfrac{g_2h_3 x}{g_3+h_3 x-\ddots}}}\\
&\Longleftrightarrow\\ 
&f_n=(-1)^n\left|\begin{array}{ccccc}
\frac{H_1}{G_1}&1&0&&\\  
\frac{H_2}{G_2}&\frac{H_1}{G_1}&1&&\\
\vdots&\vdots&\ddots&&0\\
\frac{H_{n-1}}{G_{n-1}}&\frac{H_{n-2}}{G_{n-2}}&\cdots&\frac{H_1}{G_1}&1\\ 
\frac{H_n}{G_n}&\frac{H_{n-1}}{G_{n-1}}&\cdots&\frac{H_2}{G_2}&\frac{H_1}{G_1}
\end{array}\right|\\
&\Longleftrightarrow\quad\sum_{k=0}^n\frac{f_k H_{n-k}}{G_{n-k}}=\begin{cases}
1&\text{if $n=0$};\\
0&\text{if $n\ge 1$}
\end{cases}\\ 
&\Longleftrightarrow\\ 
&\frac{H_n}{G_n}=(-1)^n\left|\begin{array}{ccccc}
f_1&1&0&&\\  
f_2&f_1&1&&\\
\vdots&\vdots&\ddots&&0\\
f_{n-1}&f_{n-2}&\cdots&f_1&1\\ 
f_n&f_{n-1}&\cdots&f_2&f_1
\end{array}\right|\\
&\Longleftrightarrow\\ 
&f_n=\sum_{t_1+2 t_2+\cdots+n t_n=n}\binom{t_1+\cdots+t_n}{t_1,\dots,t_n}(-1)^{t_1+\cdots+t_n}\left(\frac{H_1}{G_1}\right)^{t_1}\left(\frac{H_2}{G_2}\right)^{t_2}\cdots\left(\frac{H_n}{G_n}\right)^{t_n}\\
&\phantom{f_n}=\sum_{k=1}^n(-1)^k\sum_{i_1+\cdots+i_k=n\atop i_1,\dots,i_k\ge 1}\frac{H_{i_1}}{G_{i_1}}\cdots\frac{H_{i_k}}{G_{i_k}}\\
&\Longleftrightarrow\\ 
&\frac{H_n}{G_n}=\sum_{t_1+2 t_2+\cdots+n t_n=n}\binom{t_1+\cdots+t_n}{t_1,\dots,t_n}(-1)^{t_1+\cdots+t_n}f_1^{t_1}f_2^{t_2}\cdots f_n^{t_n}\\
&\phantom{\frac{H_n}{G_n}}=\sum_{k=1}^n(-1)^k\sum_{i_1+\cdots+i_k=n\atop i_1,\dots,i_k\ge 1}f_{i_1}\cdots f_{i_k}\,.   
\end{align*}
\label{lem:det-cont}
\end{Lem}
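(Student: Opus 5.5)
The plan is to organize everything around the sequence $a_n:=H_n/G_n$ (with $a_0=1$) and the power series $A(x)=\sum_{j\ge0}a_jx^j$, and to prove every listed statement equivalent to the convolution identity
\begin{equation*}
\sum_{k=0}^n f_k a_{n-k}=[n=0],
\end{equation*}
which says exactly $F(x)A(x)=1$ with $F(x)=\sum f_nx^n$. (Comparing constant terms forces $f_0=1$, so I read the normalization in the statement as $f_0=1$.) Thus the first equivalence, series form $\Leftrightarrow$ convolution, is the definition of the reciprocal of a formal power series. The two determinant equivalences follow from Lemma \ref{lem:inversion1} after a sign change. Put $\alpha_n=(-1)^nf_n$ and $\beta_n=(-1)^na_n$. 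Then
\begin{equation*}
\sum_k(-1)^{n-k}\alpha_k\beta_{n-k}=\sum_k f_ka_{n-k},
\end{equation*}
and the Hessenberg determinant with entries $(-1)^ia_i$ equals $(-1)^n$ times the one with entries $a_i$. To see this, conjugate by $D=\mathrm{diag}((-1)^i)$ and multiply by $-1$: this turns the $(i,j)$ entry $b_{i-j+1}$ into $(-1)^{i-j+1}b_{i-j+1}$, and the superdiagonal $1$'s are preserved. Hence the three middle statements are equivalent.

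The compositional (Trudi) forms come from Lemma \ref{lem:trudi} with $a_0=1$ applied to the determinant. This gives the signed multinomial sum with sign $(-1)^{n-\sum t_i}$, and the extra $(-1)^n$ converts it to $(-1)^{\sum t_i}$. The second expression is the same sum written over ordered compositions: for fixed multiplicities $t_i$ there are $\binom{t_1+\cdots+t_n}{t_1,\dots,t_n}$ orderings, and $k=\sum t_i$. Equivalently, it is the expansion $1/A=\sum_k(-1)^k(A-1)^k$, which gives a direct check independent of Trudi. The symmetric statements for $a_n$ in terms of the $f_i$ follow identically, since the convolution relation is symmetric in $f$ and $a$.

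The remaining and hardest step is the continued fraction equality: I must show that
\begin{equation*}
1-\cfrac{h_1x}{g_1+h_1x-\cfrac{g_1h_2x}{g_2+h_2x-\cdots}}
\end{equation*}
equals $1/A(x)$. I would prove it through tails. Define $A_m(x)=\sum_{j\ge0}\frac{h_{m+1}\cdots h_{m+j}}{g_{m+1}\cdots g_{m+j}}x^j$, so that $A_m=1+\frac{h_{m+1}x}{g_{m+1}}A_{m+1}$. From this recursion I get the algebraic identity
\begin{equation*}
\frac{1}{A_m}=1-\cfrac{h_{m+1}x}{g_{m+1}+h_{m+1}x-\cfrac{g_{m+1}h_{m+2}x}{g_{m+2}\,A_{m+1}/A_{m+2}\cdots}}.
\end{equation*}
I expect getting the exact form right here to be the fiddly part. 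Concretely I would verify the one-step identity
\begin{equation*}
\frac{1}{A_m}=1-\frac{h_{m+1}x}{g_{m+1}+h_{m+1}x-g_{m+1}h_{m+2}x\,\frac{A_{m+2}}{A_{m+1}}\cdot\frac{1}{g_{m+2}}\cdots}
\end{equation*}
by substituting $A_m=1+\frac{h_{m+1}x}{g_{m+1}}A_{m+1}$ and simplifying. I would then iterate, noting that each truncation agrees with $1/A$ modulo an increasing power of $x$, since the $m$-th level only affects coefficients of $x^{m}$ and beyond. That gives convergence in the $x$-adic topology.

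If this direct manipulation becomes messy, the fallback is to cite \cite{Ko20,Ko21} and \cite[Theorem 1]{KKal26a}, where this continued fraction for the reciprocal of a hypergeometric-type series $\sum H_j/G_j\,x^j$ is established.
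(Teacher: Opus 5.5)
Reducing every item to $F(x)A(x)=1$ is the right organizing idea. You are also right that the normalization must be $f_0=1$: the $n=0$ case of the convolution forces it.

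\textbf{The determinant step has a sign error as written.} Write $\det[b_i]$ for the $n\times n$ Hessenberg--Toeplitz determinant with $b_1,\dots,b_n$ on and below the diagonal and $1$'s on the superdiagonal. With your choice $\alpha_n=(-1)^nf_n$, $\beta_n=(-1)^na_n$ you get $(-1)^{n-k}\alpha_k\beta_{n-k}=(-1)^{k}f_ka_{n-k}$, not $f_ka_{n-k}$. The first equivalence of Lemma~\ref{lem:inversion1} then becomes $(-1)^nf_n=\det[(-1)^ia_i]=(-1)^n\det[a_i]$, i.e.\ $f_n=\det[a_i]$. The two twists cancel, and this is false for odd $n$: already for $n=1$, $FA=1$ gives $f_1=-a_1$.

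\textbf{Fix: twist only one sequence.} Take $\alpha_n=(-1)^nf_n$ and $\beta_n=a_n$; both still equal $1$ at index $0$. Then $(-1)^{n-k}\alpha_k\beta_{n-k}=(-1)^nf_ka_{n-k}$, so the middle condition of Lemma~\ref{lem:inversion1} is exactly your convolution. The first determinant condition gives $f_n=(-1)^n\det[a_i]$ directly. The last gives $a_n=\det[(-1)^if_i]$. Your conjugation identity $\det[(-1)^ib_i]=(-1)^n\det[b_i]$, obtained from $-DMD^{-1}$, is correct and turns this into $a_n=(-1)^n\det[f_i]$. So that identity is needed once, for the second determinant only.

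\textbf{The rest is sound.} With that repaired, your argument is a genuinely different route from the paper, which gives no proof here and simply quotes \cite[Theorem 1]{KKal26a}.
\begin{itemize}
\item The Trudi and composition forms are correct. Your check via $1/A=\sum_k(-1)^k(A-1)^k$ even makes them independent of Lemma~\ref{lem:trudi}.
\item Your one-step tail identity is exactly right. With $T_m=g_mA_{m-1}/A_m$ one has $1/A_0=1-h_1x/T_1$ and $T_m=g_m+h_mx-g_mh_{m+1}x/T_{m+1}$.
\end{itemize}

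\textbf{State the convergence precisely.} Replacing $T_N$ by $g_N+h_Nx$ changes it by $O(x)$. That changes $T_{N-1}$ by $O(x^2)$, and so on, so $T_1$ changes by $O(x^N)$ and the whole expression by $O(x^{N+1})$. This needs each $T_m$ to be invertible in the power series ring, i.e.\ $g_m\neq0$. That is implicit, because the $G_j$ occur in denominators.

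Your route gives a self-contained proof resting only on $FA=1$ and the cofactor recurrence behind Lemma~\ref{lem:inversion1}. The paper's citation gains only brevity.
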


\section{Bell polynomials}   

In Theorem \ref{th1}, we find a determinant expression of Fibonomial coefficients. In fact, with the help of Bell polynomials, we can still have a more determinant expression of a slightly modified form.  

The (complete exponential) Bell polynomial $\mathbf Y_n(x_1,x_2,\dots,x_n)$ is defined by 
\begin{equation}
\exp\left(\sum_{m=1}^\infty x_m\frac{t^m}{m!}\right)=1+\sum_{n=1}^\infty\mathbf Y_n(x_1,x_2,\dots,x_n)\frac{t^n}{n!}\,. 
\label{def:bell-p}
\end{equation}
It is expressed as  
\begin{align*}
&\mathbf Y_n(x_1,x_2,x_3,\dots,x_n)\\ 
&=\sum_{k=1}^n\sum_{i_1+2 i_2+\cdots+(n-k+1)i_{n-k+1}=n\atop i_1+i_2+i_3+\cdots=k}\frac{n!}{i_1!i_2!\cdots i_{n-k+1}!}\\
&\qquad\qquad\times \left(\frac{x_1}{1!}\right)^{i_1}\left(\frac{x_2}{2!}\right)^{i_2}\cdots\left(\frac{x_{n-k+1}}{(n-k+1)!}\right)^{i_{n-k+1}}
\end{align*} 
with $\mathbf Y_0=1$ (see, e.g., \cite[\S 3.3]{Comtet}). 
In \cite{Ko26m1} and the related papers \cite{CKm,Ko25m2,KLm5,KPm4,KWm3}, by applying the theory of Bell polynomials (some fundamental theories can be found in \cite{MacDonald95}), we explicitly expressed the $q$-multiple zeta values specifically in cases involving general indices (powers) and a small number of summands using determinants.  

By applying a similar method, it becomes possible to express Fibonomial coefficients as the determinant of a matrix that is a slightly modified form of a Hessenberg matrix.  

We need the expression of Fibonomial coefficients in terms of Bell polynomials. 

\begin{Lem}  
For integers $n$ and $k$ with $0\le k\le n$, 
$$ 
(-1)^{\binom{k}{2}}\binom{n}{k}_F 
=\frac{1}{k!}\mathbf Y_k(s_{n,1},-1!s_{n,2},2!s_{n,3}\dots,(-1)^{k-1}(k-1)!s_{n,k})\,,
$$  
where 
$$
s_{n,r}=\sum_{j=0}^{n-1}(\alpha^{n-1-j}\beta^j)^r=\alpha^{r(n-1)}\dfrac{1-(\beta/\alpha)^{r n}}{1-(\beta/\alpha)^{r}}\,.
$$
\label{lem:fibonom-bell}
\end{Lem}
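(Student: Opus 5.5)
We start from Lemma \ref{gf:fibonom}, which writes the generating polynomial of the signed Fibonomial coefficients as a finite product:
$$
P_n(z):=\sum_{k=0}^n(-1)^{\binom{k}{2}}\binom{n}{k}_F z^k=\prod_{j=0}^{n-1}(1+\gamma_j z),\qquad \gamma_j:=\alpha^{n-1-j}\beta^j .
$$
So $(-1)^{\binom{k}{2}}\binom{n}{k}_F=e_k(\gamma_0,\dots,\gamma_{n-1})$, the $k$-th elementary symmetric polynomial in the $\gamma_j$. The power sums of these quantities are exactly
$$
p_r(\gamma)=\sum_{j=0}^{n-1}\gamma_j^r=s_{n,r}.
$$
The closed form for $s_{n,r}$ in the statement is just the geometric series with ratio $(\beta/\alpha)^r$, after factoring out $\alpha^{r(n-1)}$. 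This ratio is never $1$ for $r\ge1$, since $|\beta/\alpha|<1$.

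The main step is to take logarithms of the product as formal power series in $z$:
$$
\log P_n(z)=\sum_{j=0}^{n-1}\log(1+\gamma_j z)=\sum_{j}\sum_{m\ge1}\frac{(-1)^{m-1}}{m}\gamma_j^m z^m=\sum_{m\ge1}(-1)^{m-1}\frac{s_{n,m}}{m}z^m .
$$
Writing $\frac{1}{m}=\frac{(m-1)!}{m!}$, this becomes
$$
\log P_n(z)=\sum_{m\ge1}x_m\frac{z^m}{m!},\qquad x_m:=(-1)^{m-1}(m-1)!\,s_{n,m}.
$$
Exponentiating and comparing with the definition (\ref{def:bell-p}) of the complete Bell polynomials gives
$$
P_n(z)=1+\sum_{k\ge1}\mathbf Y_k(x_1,\dots,x_k)\frac{z^k}{k!}.
$$
Reading off the coefficient of $z^k$ then yields the claimed identity, with arguments $s_{n,1},-1!\,s_{n,2},2!\,s_{n,3},\dots,(-1)^{k-1}(k-1)!\,s_{n,k}$.

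The only points needing care are the following.
\begin{itemize}
\item \emph{Formal validity.} The manipulation is legitimate in the ring of formal power series $\mathbb{R}[[z]]$, because each factor $1+\gamma_j z$ has constant term $1$, so $\log$ and $\exp$ are well defined and mutually inverse there.
\item \emph{The case $k=0$.} Both sides equal $1$, using $\mathbf Y_0=1$.
\item \emph{Coefficients beyond $k=n$.} For $k>n$ the coefficient $\mathbf Y_k/k!$ must vanish, since $P_n$ has degree $n$; this is automatic and consistent with the restriction $0\le k\le n$ in the statement.
\end{itemize}

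The argument is essentially Newton's identities expressed in Bell-polynomial form, so I expect no real obstacle. The only place to be careful is the sign and factorial bookkeeping in $x_m=(-1)^{m-1}(m-1)!\,s_{n,m}$, which must be matched precisely against the argument pattern stated in the lemma.
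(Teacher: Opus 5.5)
Your proposal is correct and follows essentially the same route as the paper: you write the signed Fibonomial generating polynomial as $\prod_{j=0}^{n-1}(1+\alpha^{n-1-j}\beta^j z)$ via Lemma~\ref{gf:fibonom}, take the logarithm to get $x_m=(-1)^{m-1}(m-1)!\,s_{n,m}$, and read off the coefficient of $z^k$ from the exponential definition (\ref{def:bell-p}) of $\mathbf Y_k$. Your extra remarks on formal validity, on the closed form of $s_{n,r}$, and on the case $k=0$ are correct; they simply make explicit points the paper leaves implicit.
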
 
\begin{proof}  
We have 
\begin{align*}
\prod_{j=0}^{n-1}(1+X_j z)&=\exp\left(\sum_{m=1}^\infty h_m\frac{z^m}{m!}\right)\\
&=\sum_{k=0}^n\mathbf Y_k(h_1,h_2,\dots,h_k)\frac{z^k}{k!}\,,
\end{align*} 
where 
$$
\frac{h_m}{m!}=\frac{(-1)^{m-1}}{m}p_m\quad \left(p_m=\sum_{j=0}^{n-1}X_j^m\right)\,,
$$ 
that is, $h_m=(-1)^{m-1}(m-1)!\sum_{j=0}^{n-1}X_{j}^m$ ($1\le m\le k$). 
Hence, by the generating function in Lemma \ref{gf:fibonom}, we have the desired identity with $X_j=\alpha^{n-1-j}\beta^j$ and so $p_r=s_{n,r}$.  
\end{proof}

For two sequences $a_0=1,a_1,a_2,\dots$ and $b_0=1,b_1,b_2,\dots$, we have the equivalent expressions (\cite[Lemma 5]{Ko26m1}). 

\begin{Lem}  
The following expressions are equivalent.  
\begin{enumerate}
\item[(1)] $\displaystyle b_m=\frac{1}{m!}\mathbf Y_m\bigl(a_1,-1!a_2,-2!a_3,\dots,(-1)^{m-1}(m-1)!a_m\bigr)$
\item[(2)] $\displaystyle b_m=\frac{1}{m!}\left|\begin{array}{ccccc}
a_1&1&0&\cdots&0\\
a_2&a_1&2&&\vdots\\
\vdots&&\ddots&&0\\
a_{m-1}&a_{m-2}&\cdots&a_1&m-1\\
a_m&a_{m-1}&\cdots&a_2&a_1\\
\end{array}
\right|$ 
\item[(3)] $\displaystyle a_n=\left|\begin{array}{ccccc}
b_1&1&0&\cdots&0\\
2 b_2&b_1&1&&\vdots\\
\vdots&&\ddots&&0\\
(n-1)b_{n-1}&b_{n-2}&\cdots&b_1&1\\
n b_n&b_{n-1}&\cdots&b_2&b_1\\
\end{array}
\right|$ 
\item[(4)] $\displaystyle m b_m=\sum_{i=1}^m(-1)^{i-1}a_i b_{m-i}$
\item[(5)] $\displaystyle a_n=\sum_{j=1}^{n-1}(-1)^{j-1}b_j a_{n-j}+(-1)^{n+1}n b_n$ 
\end{enumerate}
\label{lem:gtrudi}
\end{Lem}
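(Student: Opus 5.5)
The plan is to organise everything around the ordinary generating function $B(z)=\sum_{m\ge0}b_m z^m$, with $b_0=1$, and to route all five statements through the recurrence (4). In (1) I read the arguments of $\mathbf Y_m$ as $x_i=(-1)^{i-1}(i-1)!\,a_i$, following the pattern of the first two entries and of Lemma \ref{lem:fibonom-bell}; the entry printed as $-2!a_3$ appears to be a sign typo for $2!a_3$. Then $x_i/i!=(-1)^{i-1}a_i/i$, and the definition (\ref{def:bell-p}) shows that (1) for all $m$ is equivalent to
$$
B(z)=\exp\Bigl(\sum_{i\ge1}(-1)^{i-1}\frac{a_i}{i}z^i\Bigr).
$$

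\textbf{(1)$\Leftrightarrow$(4).} Apply the operator $z\,\frac{d}{dz}\log$ to the identity above, which gives
$$
zB'(z)=B(z)\sum_{i\ge1}(-1)^{i-1}a_i z^i .
$$
Comparing coefficients of $z^m$ gives exactly $m b_m=\sum_{i=1}^m(-1)^{i-1}a_i b_{m-i}$, which is (4). Conversely, (4) is this differential equation, and since $b_0=1$ the differential equation has the exponential as its unique formal solution.

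\textbf{(4)$\Leftrightarrow$(5).} Isolate the $i=m=n$ term $(-1)^{n-1}a_n b_0$ in (4) and substitute $j=n-i$. The sign becomes $(-1)^{n-1}(-1)^{n-j-1}=(-1)^{j-1}$, which yields (5). Each of (4) and (5) determines one sequence recursively from the other, so the two are equivalent.

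\textbf{Determinants.} For (2), I regard (4) for $m=1,\dots,M$ as a lower triangular linear system in the unknowns $b_1,\dots,b_M$. Row $m$ reads
$$
m b_m-\sum_{i=1}^{m-1}(-1)^{i-1}a_i b_{m-i}=(-1)^{m-1}a_m .
$$
The coefficient matrix has diagonal entries $1,2,\dots,M$, so its determinant is $M!$. Cramer's rule then gives $b_M$ as $1/M!$ times a determinant. In that determinant the last column has been replaced by the right-hand side $((-1)^{m-1}a_m)_m$; after a cyclic reordering of columns, the matrix becomes Hessenberg with superdiagonal entries $1,2,\dots,M-1$ (up to sign) and Toeplitz entries $\pm a_{r-s+1}$.

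The signs are then removed by conjugating with $\mathrm{diag}((-1)^{r})$, i.e.\ multiplying row $r$ and column $r$ by $(-1)^r$. This turns each entry $(-1)^{i-1}a_i$ in position $(r,s)$ with $r-s+1=i$ into $a_i$, and leaves the determinant unchanged. The sign picked up by the column permutation must cancel against the sign of the superdiagonal. An alternative that avoids Cramer entirely is to expand the $m\times m$ determinant $D_m$ in (2) along its last column. Since each superdiagonal entry $k$ contributes a factor $-k$ to the recursion, one gets
$$
D_m=\sum_{i=1}^m(-1)^{i-1}\frac{(m-1)!}{(m-i)!}\,a_i D_{m-i},
$$
which is (4) for $b_m=D_m/m!$. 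I would probably present this expansion, with a short induction on $m$, as the actual argument.

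The same two routes give (3) from (5), now with $a$'s as unknowns, unit superdiagonal, first column $(k\,b_k)$, and Toeplitz entries $b_{r-s+1}$. Expanding along the first column, the entry $k b_k$ in row $k$ multiplies a lower unitriangular-times-Hessenberg block. Its cofactor is $(-1)^{k-1}$ times the $(n-k)\times(n-k)$ determinant of the same shape, which by induction equals $a_{n-k}$ with the convention $a_0$ handled separately for $k=n$. This reproduces (5).

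The main obstacle is purely the sign bookkeeping in these expansions: checking that $(-1)^{k-1}$ from the cofactor position combines with the alternating signs in (4)–(5) correctly, including the boundary term $k=n$. I would settle that by verifying $n=1,2,3$ explicitly before writing the induction.
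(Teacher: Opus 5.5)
Your arguments for (1)$\Leftrightarrow$(4) and (4)$\Leftrightarrow$(5) are sound. For (1)$\Leftrightarrow$(4) you use the logarithmic derivative of $B(z)=\exp\bigl(\sum_{i\ge1}(-1)^{i-1}a_iz^i/i\bigr)$, correctly reading $-2!a_3$ as $2!a_3$. In (4)$\Leftrightarrow$(5) there is one small slip: $(-1)^{n-1}(-1)^{n-j-1}=(-1)^j$, and $(-1)^{j-1}$ appears only after the sum is moved across. Your treatment of (2) via $D_m=\sum_{i=1}^m(-1)^{i-1}\frac{(m-1)!}{(m-i)!}a_iD_{m-i}$ is also correct. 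That recursion comes from expanding along the last \emph{row}, or from iterating the two-term last-column expansion; a single column expansion does not produce it. The paper itself gives no proof and imports the lemma from \cite[Lemma 5]{Ko26m1}.

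\textbf{The gap is in (3).} Let $P_n$ be the matrix in (3) and expand it along the first column, as you propose. Deleting row $k$ and column $1$ leaves a block lower triangular matrix. Its trailing block, on rows and columns $k+1,\dots,n$, has first column $(b_1,\dots,b_{n-k})^T$ \emph{without} the weights $1,2,3,\dots$. So it is not $P_{n-k}$. By Lemma~\ref{lem:inversion1}, its determinant is the coefficient $c_{n-k}$ of $z^{n-k}$ in $1/B(-z)$, not $a_{n-k}$. Already for $n=3$, $k=1$ the cofactor is $\begin{vmatrix}b_1&1\\ b_2&b_1\end{vmatrix}=b_1^2-b_2$, whereas $a_2=b_1^2-2b_2$.

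Even if your cofactor claim held, you would obtain $\sum_{k=1}^n(-1)^{k-1}k\,b_ka_{n-k}$. This carries the weight $k$ on every term, while (5) carries it only on $(-1)^{n+1}nb_n$. For $n=3$ it gives $b_1^3-4b_1b_2+3b_3$ instead of $\det P_3=b_1^3-3b_1b_2+3b_3$. So the problem is structural, not sign bookkeeping. What the first-column expansion really yields is $a_n=\sum_k(-1)^{k-1}k\,b_kc_{n-k}$. That identity is true, but linking it to (5) needs a further generating-function argument.

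\textbf{The repair:} expand $P_n$ along its last row, just as for (2).
\begin{itemize}
\item For $1\le j\le n-1$, deleting row $n$ and column $n-j+1$ leaves $P_{n-j}$ in the upper left, which keeps the weighted first column. The lower right is a lower unitriangular block coming from the unit superdiagonal, and the block above it is zero. The cofactor sign is $(-1)^{n+(n-j+1)}=(-1)^{j-1}$.
\item The entry $nb_n$ at position $(n,1)$ has a unitriangular minor and sign $(-1)^{n+1}$.
\end{itemize}
Hence $\det P_n=\sum_{j=1}^{n-1}(-1)^{j-1}b_j\det P_{n-j}+(-1)^{n+1}nb_n$, which is exactly (5). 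Since (5) determines the $a_n$ recursively from the $b$'s, this gives (3)$\Leftrightarrow$(5).

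Your Cramer alternative would also work for (3). However, as you sketched it for (2), the decisive sign cancellation between the cyclic column permutation and the superdiagonal is asserted rather than checked.
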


Combining Lemma \ref{lem:fibonom-bell} and Lemma \ref{lem:gtrudi} (1) and (2),  we can get a determinant expression in terms of $s_{n,r}$ ($1\le r\le n$). For convenience, we put $\delta_j=(-1)^{\binom{n}{j}}$.   
Note that only the diagonal elements immediately above the main diagonal take the values $1,2,3,\dots,n-1$, and are therefore not equal.   

\begin{theorem} 
For integers $n$ and $k$ with $0\le k\le n$,  
$$ 
\binom{n}{k}_F=\frac{1}{k!}\left|\begin{array}{ccccc}
s_{n,1}&1&0&\cdots&0\\
\delta_2 s_{n,2}&s_{n,1}&2&&\vdots\\
\vdots&&\ddots&&0\\
\delta_{k-1}s_{n,k-1}&\delta_{k-2}s_{n,k-2}&\cdots&s_{n,1}&k-1\\
\delta_k s_{n,k}&\delta_{k-1}s_{n,k-1}&\cdots&\delta_2 s_{n,2}&s_{n,1}\\
\end{array}
\right|\,.
$$  
\label{th:fibonom-bell1}
\end{theorem}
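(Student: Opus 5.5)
The plan is to combine Lemma~\ref{lem:fibonom-bell} with the equivalence (1)$\Leftrightarrow$(2) of Lemma~\ref{lem:gtrudi}. I put $a_r=s_{n,r}$ for $1\le r\le k$ and $b_j=(-1)^{\binom{j}{2}}\binom{n}{j}_F$ for $0\le j\le k$, so that $a_0=b_0=1$. Lemma~\ref{lem:fibonom-bell} then says precisely that
$$
b_k=\frac{1}{k!}\mathbf Y_k\bigl(a_1,-1!a_2,2!a_3,\dots,(-1)^{k-1}(k-1)!a_k\bigr),
$$
which is statement (1) of Lemma~\ref{lem:gtrudi}. Lemma~\ref{lem:gtrudi}(2) converts this at once into the determinant
$$
(-1)^{\binom{k}{2}}\binom{n}{k}_F=\frac{1}{k!}\det\bigl(M_k\bigr),
$$
where $M_k$ is the Hessenberg matrix with $s_{n,i-j+1}$ in position $(i,j)$ for $i\ge j$ and superdiagonal $1,2,\dots,k-1$. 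One point needs care here: the sign pattern $-1!,\,+2!$ in Lemma~\ref{lem:fibonom-bell} must really match $(-1)^{m-1}(m-1)!$ in Lemma~\ref{lem:gtrudi}(1). I would confirm this by rechecking the relation $h_m=(-1)^{m-1}(m-1)!p_m$ obtained from $\log\prod_j(1+X_jz)$.

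The second step is to move the global sign $(-1)^{\binom{k}{2}}$ inside the matrix as the entry signs $\delta_j$ of the statement. The naive device is diagonal scaling: multiply row $i$ by $\lambda^i$ and column $j$ by $\lambda^{-(j-1)}$. This keeps the superdiagonal fixed, multiplies the $(i,j)$ entry by $\lambda^{i-j+1}$, and multiplies the determinant by $\lambda^k$. It only produces signs of the form $\lambda^m$, which are multiplicative in the index, so it cannot produce the quadratic sign $(-1)^{\binom{k}{2}}$ directly. I would therefore not rely on scaling. Instead I would work with the recurrence of Lemma~\ref{lem:gtrudi}(4),
$$
k\,b_k=\sum_{i=1}^k(-1)^{i-1}s_{n,i}\,b_{k-i},
$$
rewritten in terms of $\binom{n}{j}_F$. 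I would then expand the modified determinant of the statement along its last row, mirroring the induction in the proof of Theorem~\ref{th1}, and check that both sides satisfy the same recurrence with the same initial value $\binom{n}{0}_F=1$.

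Dividing the recurrence by $(-1)^{\binom{k}{2}}$ turns the coefficient of $s_{n,i}\binom{n}{k-i}_F$ into $(-1)^{i-1}(-1)^{\binom{k-i}{2}-\binom{k}{2}}=(-1)^{i-1+\binom{i}{2}+i(k-i)}$. This depends on $k$ through $i(k-i)$. The main obstacle is to reconcile this with the entries $\delta_{i}s_{n,i}$ of the statement, which depend only on $i$. The printed definition $\delta_j=(-1)^{\binom{n}{j}}$ looks like a misprint, so I would first pin down the intended $\delta_j$ by comparing small cases ($k=2,3$), where $s_{n,1}=F_n$ and $s_{n,2}=F_{2n}/F_n\cdot(\ldots)$ can be checked numerically against $\binom{n}{2}_F$ and $\binom{n}{3}_F$. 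With the correct $\delta_j$ fixed, I would carry out the last-row cofactor expansion. The $i(k-i)$ parity should then be absorbed by the alternating cofactor signs combined with the superdiagonal products $(k-1)(k-2)\cdots(k-i+1)$ coming from the minors, which finishes the induction.
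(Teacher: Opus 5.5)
Your first step is the paper's entire argument. The paper justifies the theorem with one sentence, combining Lemma~\ref{lem:fibonom-bell} with (1)$\Leftrightarrow$(2) of Lemma~\ref{lem:gtrudi}, and that combination yields exactly your identity $(-1)^{\binom{k}{2}}\binom{n}{k}_F=\frac{1}{k!}\det(M_k)$ with plain entries $s_{n,i-j+1}$. Your sign concern resolves correctly: $\log\prod_j(1+X_jz)=\sum_m(-1)^{m-1}p_mz^m/m$, so the ``$-2!a_3$'' in Lemma~\ref{lem:gtrudi}(1) is a misprint for $+2!a_3$.

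The gap is your second step, and it cannot be closed. The cofactor signs and superdiagonal products $(k-1)\cdots(k-i+1)$ that you hope will absorb the parity $i(k-i)$ are already fully accounted for. Take any determinant of the stated shape, with entries $\delta_i s_{n,i}$ and superdiagonal $1,\dots,k-1$, and set $c_k=\frac{1}{k!}\det$. Expanding along the last row gives exactly $k\,c_k=\sum_{i=1}^k(-1)^{i-1}\delta_i s_{n,i}c_{k-i}$. This is recurrence (4) with $a_i=\delta_i s_{n,i}$, and its coefficients do not depend on $k$. The recurrence actually satisfied by $\binom{n}{k}_F$ has sign $(-1)^{i-1+\binom{i}{2}+i(k-i)}$, as you computed. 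Already for $i=1$ this is $(-1)^{k-1}$, whereas the diagonal entry $s_{n,1}$ is unsigned. So no $k$-independent choice of $\delta_j$ can work, and nothing is left over to absorb the discrepancy.

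The small-case check you propose refutes the statement rather than pinning down $\delta_j$. We have $s_{n,1}=F_n$ and $s_{n,2}=(\alpha^{2n}-\beta^{2n})/(\alpha^2-\beta^2)=F_{2n}=F_nL_n$, not $F_{2n}/F_n$ times something. So for $k=2$ the right-hand side is $\frac12(F_n^2-\delta_2F_{2n})$. This equals $-F_nF_{n-1}$ if $\delta_2=1$ and $F_nF_{n+1}$ if $\delta_2=-1$. Neither is $\binom{n}{2}_F=F_nF_{n-1}$ for $n\ge 2$. For example, at $n=3$ we get $s_{3,1}=2$, $s_{3,2}=8$, and $\frac12(4\mp 8)\in\{-2,6\}$, while $\binom{3}{2}_F=2$.

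Hence the theorem is false as printed, whatever $\delta_j$ is meant to be. The paper's own passage from the combined lemmas to the $\delta_j$-form is equally unjustified. What is true, and what your step 1 already proves, is the version with the sign outside: $\binom{n}{k}_F=\frac{(-1)^{\binom{k}{2}}}{k!}\det(M_k)$. For comparison, the choice $\delta_j=(-1)^{j-1}$ turns (4) into Newton's identity for complete homogeneous symmetric functions. By the product formula for $A(z)$ in the proof of Lemma~\ref{lem2}, with $n\to n-1$ and $z\to -z$, it gives $\frac{1}{k!}\det=\binom{n+k-1}{k}_F$, not $\binom{n}{k}_F$.
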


By using the inversion relation in Lemma \ref{lem:gtrudi} (2) and (3), we can obtain a determinant representation of $s_r$ in terms of Fibonomial coefficients. Note that only the first column consists of $1$ times, $2$ times, $3$ times, and so on, the respective elements.
  
\begin{theorem}  
For integers $n$ and $k$ with $0\le k\le n$, 
$$
s_{n,k}=\left|\begin{array}{ccccc}
\binom{n}{1}_F&1&0&\cdots&0\\
2\delta_2\binom{n}{2}_F&\binom{n}{1}_F&1&&\vdots\\
\vdots&&\ddots&&0\\
(k-1)\delta_{k-1}\binom{n}{k-1}_F&\delta_{k-2}\binom{n}{k-2}_F&\cdots&\binom{n}{1}_F&1\\
k\delta_r\binom{n}{k}_F&\delta_{k-1}\binom{n}{k-1}_F&\cdots&\delta_2\binom{n}{2}_F&\binom{n}{1}_F\\
\end{array}
\right|\,. 
$$ 
\label{th:bell-fibonom2}
\end{theorem}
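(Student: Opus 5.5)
The plan is to combine Lemma \ref{lem:fibonom-bell} with the equivalence (1)$\Leftrightarrow$(3) of Lemma \ref{lem:gtrudi}. Fix $n$ and put
\[
a_r=s_{n,r},\qquad b_m=(-1)^{\binom{m}{2}}\binom{n}{m}_F\quad(0\le m\le n).
\]
Then $b_0=1$, and we also set $a_0=1$. I read the $\delta_j$ in the statement as $(-1)^{\binom{j}{2}}$; with $\delta_j=(-1)^{\binom{n}{j}}$ the identity does not match Lemma \ref{lem:fibonom-bell}. I also read the bottom-left entry as $k\delta_k\binom{n}{k}_F$. With these readings, the statement is exactly formula (3) of Lemma \ref{lem:gtrudi} with $a_k=s_{n,k}$. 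Note that $\delta_1=1$, so the entries $\binom{n}{1}_F$ on the diagonal are $b_1$.

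\textbf{Step 1 (verify (1)).} Lemma \ref{lem:fibonom-bell} gives
\[
b_m=\frac{1}{m!}\mathbf Y_m\bigl(s_{n,1},-1!\,s_{n,2},2!\,s_{n,3},\dots,(-1)^{m-1}(m-1)!\,s_{n,m}\bigr),
\]
which is (1) of Lemma \ref{lem:gtrudi}. The sign pattern in Lemma \ref{lem:gtrudi}(1) as printed ($-2!a_3$) looks like a typo for $(-1)^{j-1}(j-1)!a_j$. To be safe, I will not rely on the printed form. Instead I will take the logarithmic derivative of
\[
\sum_m b_m z^m=\prod_{j=0}^{n-1}\bigl(1+\alpha^{n-1-j}\beta^j z\bigr)
\]
from Lemma \ref{gf:fibonom}, which gives Newton's identity
\[
m b_m=\sum_{i=1}^m(-1)^{i-1}s_{n,i}\,b_{m-i}.
\]
This is relation (4) directly, and it bypasses any sign ambiguity in (1).

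\textbf{Step 2 (pass from (4) to (3)).} Relation (4) for $m=1,\dots,k$ is a lower-triangular linear system in the unknowns $s_{n,1},\dots,s_{n,k}$: row $m$ has coefficient $b_0=1$ on $s_{n,m}$, coefficients $(-1)^{i-1}b_{m-i}$ on $s_{n,i}$ for $i<m$, and right-hand side $mb_m$. Solve for $s_{n,k}$ by Cramer's rule. The numerator determinant has the column $(1b_1,2b_2,\dots,kb_k)^T$ in place of the last unknown column. Reorder the columns to put it first, then conjugate by $\mathrm{diag}((-1)^i)$ to remove the alternating signs. This yields the Hessenberg determinant of (3), with first column $mb_m$, remaining subdiagonal entries $b_{i-j}$, and superdiagonal $1$. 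This is the content of (4)$\Leftrightarrow$(3) in Lemma \ref{lem:gtrudi}, which I may simply cite.

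\textbf{Step 3 (alternative check and main obstacle).} As a cross-check, expand the determinant of (3) along its first column and induct on $k$, as in the proof of Theorem \ref{th1}; this reproduces (5), which is equivalent to (4). The range $k\le n$ matters only because $b_m=0$ for $m>n$; Newton's identity still holds there, so no issue arises. The main obstacle is bookkeeping the signs: checking that the $(-1)^{i-1}$ in Newton's identity, together with the permutation and diagonal conjugation, leave exactly the superdiagonal $1$'s and positive entries $\delta_j\binom{n}{j}_F$ with no residual global sign. I would verify this first on $k=2$, where
\[
s_{n,2}=\binom{n}{1}_F^2-2\delta_2\binom{n}{2}_F=b_1^2-2b_2,
\]
which agrees with Newton's identity.
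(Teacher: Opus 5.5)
Your proposal is correct and is essentially the paper's argument. The paper goes from Lemma~\ref{lem:fibonom-bell} (form (1) of Lemma~\ref{lem:gtrudi}) through Theorem~\ref{th:fibonom-bell1} (form (2)) and then applies the inversion (2)$\Leftrightarrow$(3). You instead enter Lemma~\ref{lem:gtrudi} at (4), via Newton's identities from the logarithmic derivative of Lemma~\ref{gf:fibonom}. That conveniently avoids the sign typo in the printed (1), and your readings $\delta_j=(-1)^{\binom{j}{2}}$ and $k\delta_k$ are the intended ones.

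One small slip in Step 2: the coefficient of $s_{n,m}$ in row $m$ is $(-1)^{m-1}b_0$, not $1$, so the system determinant is $(-1)^{\binom{k}{2}}$. This sign is exactly what cancels the column factors $(-1)^{\binom{k-1}{2}}$ and the cyclic-shift sign $(-1)^{k-1}$. So $s_{n,k}$ still equals the Hessenberg determinant with no residual sign, and your conclusion stands.
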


\section{Final comments}   

One may think that the results in this paper can be generalized about a more general 
$$
\binom{n}{m}_u=\frac{u_n u_{n-1}\cdots u_1}{u_m u_{m-1}\cdots u_1\cdot u_{n-m}u_{n-m-1}\cdots u_1}\,,  
$$ 
introduced in \cite{JM49}.  However, considering the positions of $0$ in (\ref{eq:emrah10}) as well as the elements' structure in Theorem \ref{th1}, it is found that Fibonacci numbers are the best choice, so it is not easy to generalize at all.




\medskip 
\noindent  
{\bf Availability of data and materials}\\\noindent 
Not applicable.


\end{document}